\pgfplotsset{compat=1.16}
\theoremstyle{definition}
\newtheorem{dfn}{Definition}[section]
\newtheorem{problem}{Problem}
\newtheorem{notation}{Notation}[section]
\newtheorem{lemma}[notation]{Lemma}
\newtheorem{theorem}[notation]{Theorem}
\newtheorem{remark}[dfn]{Remark}
\newtheorem{conj}[dfn]{Conjecture}
\newtheorem*{theorem4.1}{Theorem 4.1 of \cite{endo2024guaranteed}}
\title{The second Dirichlet eigenvalue is simple on every non-equilateral triangle }
\author{Ryoki Endo\thanks{Graduate School of Science and Technology, Niigata University, Niigata, Japan
(endo@m.sc.niigata-u.ac.jp).}, Xuefeng Liu\thanks{School of Arts and Sciences, Tokyo Woman's Christian University, Tokyo, Japan (xfliu@cis.twcu.ac.jp) (Corresponding author).}}
\begin{document}
\date{}
\maketitle

\begin{abstract}
The Dirichlet eigenvalues of the Laplacian on a triangle that collapses into a line segment diverge to infinity. In this paper, to track the behavior of the eigenvalues during the collapsing process of a triangle, we establish a quantitative estimate for the Dirichlet eigenvalues on collapsing triangles. As an application, we solve the open problem concerning the simplicity of the second Dirichlet eigenvalue for nearly degenerate triangles, offering a complete solution to Conjecture 6.47 posed by R. Laugesen and B. Siudeja in A. Henrot's book ``Shape Optimization and Spectral Theory".
\end{abstract}






\section{Introduction}
The rich relationship between Laplacian eigenvalues and shapes gave birth to the field of spectral geometry, which continues to attract researchers from various disciplines.
In this paper, we provide a computer-assisted proof for a conjecture about Dirichlet eigenvalues posed by R. Laugesen and B. Siudeja in Henrot's book ``Shape Optimization and Spectral Theory" \cite{henrot2017shape}:

\begin{conj}[Conjecture 6.47 of \cite{henrot2017shape}]
\label{len:main-conjecture}
The second Dirichlet eigenvalue is simple on every non-equilateral triangle.
\end{conj}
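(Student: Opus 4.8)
The plan is to reduce Conjecture~\ref{len:main-conjecture} to finitely many certifiable sub-statements by partitioning the moduli space of triangles. Up to rescaling and congruence a triangle is determined by its two smallest angles $(\alpha,\beta)$ with $0<\alpha\le\beta\le(\pi-\alpha)/2$ (so $\alpha\le\pi/3$); the equilateral triangle is the single point $(\pi/3,\pi/3)$, collapse to a segment is the limit $\alpha\to0$, and the region $\mathcal M=\{(\alpha,\beta):0<\alpha\le\beta\le(\pi-\alpha)/2\}$ carries the whole problem. Writing $\lambda_k(T)$ for the $k$-th Dirichlet eigenvalue, the sign of $\lambda_3(T)-\lambda_2(T)$ is shape-invariant since $\lambda_k(cT)=c^{-2}\lambda_k(T)$, and because the second Dirichlet eigenvalue of the equilateral triangle is double, the conjecture is equivalent to the statement $\lambda_3(T)>\lambda_2(T)$ for every $T$ with $(\alpha,\beta)\ne(\pi/3,\pi/3)$.

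\emph{Thin triangles.} Fix a threshold $\alpha_0>0$. For $\alpha\le\alpha_0$ I would invoke the quantitative collapsing estimate established in the present paper (resting on Theorem~4.1 of \cite{endo2024guaranteed}): as $T$ degenerates, each $\lambda_k(T)$ is pinned, with an explicit two-sided error, by the $k$-th eigenvalue of an effective one-dimensional Sturm--Liouville problem on the base segment, the reciprocal square of the affine height profile acting as the potential after the standard rescaling. One-dimensional Sturm--Liouville spectra are strictly simple, and $\lambda_3^{\mathrm{1D}}-\lambda_2^{\mathrm{1D}}$ here admits an explicit positive lower bound depending only on the height profile, i.e.\ on $(\alpha,\beta)$; choosing $\alpha_0$ small enough makes the collapsing error terms negligible against this bound, giving $\lambda_3(T)>\lambda_2(T)$ throughout $\{\alpha\le\alpha_0\}$.

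\emph{Near-equilateral triangles.} On an explicit neighborhood $U$ of $(\pi/3,\pi/3)$ I would run a quantitative perturbation argument. The Dirichlet eigenfunctions of the equilateral triangle for the doubled eigenvalue $\lambda_2$ are known in closed form; realizing a nearby triangle as the image of the equilateral one under an affine map $A$ depending on $(\alpha,\beta)$, the splitting of $\lambda_2$ is governed at leading order by the $2\times2$ symmetric matrix obtained by restricting the pulled-back Dirichlet form $\int \nabla u^{\top}(A^{\top}A)^{-1}\nabla v$ to the two-dimensional eigenspace. I would compute this matrix and verify that along every non-equilateral direction it fails to be a multiple of the identity, with a uniform explicit margin; a companion bound controlling the higher-order remainder on $U$ then yields $\lambda_3(T)>\lambda_2(T)$ for every non-equilateral $T\in U$.

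\emph{The bulk, and the main obstacle.} On the compact set $K=\{\alpha\ge\alpha_0\}\setminus U\subset\mathcal M$ the gap $\lambda_3-\lambda_2$ is bounded below by a positive constant (by the previous steps and continuity). I would cover $K$ by finitely many sample shapes, at each computing a guaranteed upper bound for $\lambda_2$ and a guaranteed lower bound for $\lambda_3$ via a validated finite element eigenvalue method (the Lehmann--Goerisch / hypercircle framework with explicit interpolation-constant estimates on triangular meshes), then propagate these pointwise inequalities across $K$ using the explicit Lipschitz dependence of $\lambda_2,\lambda_3$ on $(\alpha,\beta)$ coming from the controlled affine maps between domains. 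The delicate point is the interface between this step and the near-equilateral step: the finite element bounds cannot resolve an arbitrarily small spectral gap, so $U$ must be made explicit and not too small, which forces the perturbation estimate to carry a fully quantitative remainder and the $2\times2$ non-degeneracy to hold with a uniform explicit constant; arranging $\alpha_0$ and $U$ so that the three regions cover $\mathcal M$ while every estimate remains valid with certified constants is where the real work lies.
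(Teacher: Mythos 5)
Your overall architecture matches how the conjecture is actually settled: the non-degenerate region is covered by guaranteed eigenvalue bounds (this is precisely Theorem 4.1 of \cite{endo2024guaranteed}, valid for minimum normalized height at least $\tan(\pi/60)/2$), and the thin region by comparison with an effective one-dimensional operator. But your thin-triangle step contains a genuine gap: you ``invoke the quantitative collapsing estimate established in the present paper,'' i.e.\ you assume as a black box exactly the statement that constitutes the content to be proved. The difficulty is not that $\lambda_k$ is asymptotically pinned by a one-dimensional eigenvalue --- that is Ourmi\`eres-Bonafos' expansion, which gives no explicit $t_0$ --- but to produce an explicit two-sided enclosure valid for \emph{all} $t$ up to a prescribed threshold. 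The paper obtains this from two specific mechanisms you do not supply: (i) monotonicity of the effective potential $V(t,x)$ in $t$, so that the one-dimensional eigenvalue $\hat\mu_k^{t_0}$ at the fixed threshold $t_0=\tan(\pi/60)$ is an upper bound for every $t\le t_0$; and (ii) a projection error estimate onto the first transverse mode (with explicit constant $t^{1/3}/(\sqrt3\,\pi)$, in the spirit of \cite{liu2015framework}), which turns the limiting Schr\"odinger eigenvalue $\bar\mu_k$ into the lower bound $\bar\mu_k/(1+t_0^{2/3}\bar\mu_k/(3\pi^2))$. Incidentally, your attribution is backwards: the collapsing estimate does not ``rest on'' Theorem 4.1 of \cite{endo2024guaranteed}; that theorem is the complementary, non-degenerate half of the proof.

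The same gap resurfaces at the interface you yourself flag as ``where the real work lies.'' Choosing $\alpha_0$ ``small enough'' is non-constructive, while the validated finite-element/perturbation part can only certify a fixed explicit region; so the thin-triangle estimate must hold up to an explicit threshold matched to it, and the gap between the second and third effective one-dimensional eigenvalues must be bounded below uniformly in the second shape parameter (the apex position $s\in[0,1)$, including apexes near an endpoint), not merely ``depending on $(\alpha,\beta)$.'' The paper achieves this by characterizing the limiting eigenvalues through the Airy implicit equation $f_s(\kappa)=0$, verifying an enclosure of $\kappa_3(0)$, showing $f_s>0$ on a fixed $\kappa$-interval for all $s\in[0,1)$ and using analyticity in $s$ to prevent branch crossing, yielding $\bar\mu_3(s)\ge 23.5$ against a verified Rayleigh--Ritz bound $\hat\mu_2^{t_0}(s)\le 21.091$. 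Without an argument of this kind (or a substitute producing explicit, $s$-uniform constants), your outline does not close; and the bulk/near-equilateral portion you propose is essentially a re-derivation of \cite{endo2024guaranteed}, which the paper simply cites.
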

In \cite{endo2024guaranteed}, we provided a partial result confirming the conjecture, except for the case of collapsing triangles:
\begin{theorem4.1}
The second Dirichlet eigenvalue is simple for every non-equilateral triangle with its minimum normalized height \footnote{The minimum normalized height of a triangle is the height measured relative to its longest side, with the triangle scaled such that the longest side has unit length.} greater than or equal to $\tan(\pi/60)/2$.
\end{theorem4.1}
This paper completes the proof by covering the case of nearly degenerate triangles:
\begin{theorem}\label{thm:degenerate-case-for-the-main-conjecture}
The second Dirichlet eigenvalue is simple for every non-equilateral triangle with its minimum normalized height less than or equal to 
$\tan(\pi/60)/2$
.
\end{theorem}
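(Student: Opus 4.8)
The goal is to prove $\lambda_2(T)<\lambda_3(T)$ for every triangle $T$ having longest side of unit length and minimum normalized height $h\le h_0:=\tan(\pi/60)/2$; combined with Theorem 4.1 of \cite{endo2024guaranteed} quoted above, this settles Conjecture~\ref{len:main-conjecture}. Put the longest side on $[0,1]\times\{0\}$ and the apex at $(t,h)$; reflecting, we may assume $0<t\le 1/2$, and the requirement that $[0,1]$ be a longest side forces $h^2\le t(2-t)$, hence $t\gtrsim h^2$. Then $T=\{(x,y):0<x<1,\ 0<y<g(x)\}$ with $g(x)=h\psi(x)$ and $\psi(x)=\min\{x/t,(1-x)/(1-t)\}$. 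Since $h\to 0$ collapses $T$ and drives every $\lambda_k(T)$ to infinity like $\pi^2/h^2$, this range cannot be handled numerically; the proof instead rests on two ingredients, a quantitative dimension-reduction estimate for $\lambda_k$ on thin triangles and a sufficiently sharp spectral gap for the resulting one-dimensional model.

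For the first ingredient I would rescale $y=g(x)s$ onto the fixed square $(0,1)^2$, which turns the Dirichlet energy into $\iint\big(\,|\partial_x v-\tfrac{g'}{g}\,s\,\partial_s v|^2+g^{-2}|\partial_s v|^2\,\big)\,g\,dx\,ds$ with mass weight $g$. The dominant $g^{-2}$ term pins the transverse profile to $\sin(\pi s)$ up to a controlled remainder (the higher transverse modes have transverse eigenvalues exceeding $4\pi^2/h^2$, hence do not enter $\lambda_1,\lambda_2,\lambda_3$), and projecting onto it produces the effective singular Sturm--Liouville operator $\mathcal L_{h,t}f=-\tfrac{1}{\psi}(\psi f')'+\tfrac{\pi^2}{h^2\psi^2}f$ on $(0,1)$ with weight $\psi$. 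What must be proved is an explicit two-sided bound $|\lambda_k(T)-\Lambda_k(\mathcal L_{h,t})|\le\eta_k(h,t)$: the upper bound from the 2D trial functions $f_k(x)\sin(\pi y/g(x))$ built from the eigenfunctions of $\mathcal L_{h,t}$, and the lower bound from the orthogonal decomposition of $u$ into transverse modes, showing that $L^2$-orthogonality to the first three trial functions forces a large Rayleigh quotient. The error $\eta_k$ is governed by the cross term $\tfrac{g'}{g}s\,\partial_s v$, i.e. by $\psi'/\psi$.

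The second ingredient is the analysis of $\mathcal L_{h,t}$. Its spectrum is simple by the Sturm oscillation theorem, so $\Lambda_2(\mathcal L_{h,t})<\Lambda_3(\mathcal L_{h,t})$ for every admissible $(h,t)$; the content is the \emph{size} of the gap. The potential $\pi^2/(h^2\psi^2)$ attains its minimum $\pi^2/h^2$ at the altitude foot $x=t$, where $\psi$ has a downward corner, so locally it behaves like $\tfrac{\pi^2}{h^2}\big(1+\tfrac{2}{t}(t-x)_++\tfrac{2}{1-t}(x-t)_+\big)$, a one-sided linear well. Comparing with the model operator $-u''+A|x-t|$ (with a Dirichlet condition at $x=0$ when $t$ is small, so that the steep side acts as a wall) gives $\Lambda_k(\mathcal L_{h,t})=\tfrac{\pi^2}{h^2}+A^{2/3}\mu_k+\cdots$ with $A\asymp h^{-2}$ and $\mu_k$ the $k$-th simple eigenvalue of the standard Airy operator; hence $\Lambda_3(\mathcal L_{h,t})-\Lambda_2(\mathcal L_{h,t})\gtrsim h^{-4/3}$, with an absolute constant and with further improvement when $t$ is small.

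Combining the two ingredients, $\lambda_3(T)-\lambda_2(T)\ge\big(\Lambda_3(\mathcal L_{h,t})-\Lambda_2(\mathcal L_{h,t})\big)-\eta_2-\eta_3$, and the theorem follows once one checks that the $h^{-4/3}$ gap beats $\eta_2+\eta_3$ for all $(h,t)$ with $h\le h_0$; the subregion bounded away from $h=0$, being compact in parameter space, can alternatively be covered by the guaranteed-computation method of \cite{endo2024guaranteed}. The main obstacle is the lower bound: one must make $\eta_2,\eta_3$ fully explicit and small enough to be dominated by the gap \emph{uniformly as $h\to 0$ and uniformly in $t$}, and the cross-term factor $\psi'/\psi$ equals $1/x$ near the sharp vertex $x=0$ and $1/(1-x)$ near $x=1$, so the reduction error carries genuine corner singularities. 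Controlling them demands weighted estimates exploiting the vanishing of both $g$ and the eigenfunctions at the vertices, and the delicate part will be carrying out this analysis with constants good enough to clear the threshold $\tan(\pi/60)/2$ (or to hand off cleanly to the numerical range).
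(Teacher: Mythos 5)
Your program---project onto the lowest transverse mode, compare the resulting one-dimensional operator with an Airy-type model, and beat the reduction error with the model's spectral gap---is the same strategy the paper follows, but as written it is a plan rather than a proof: the two quantitative ingredients you yourself flag as ``what must be proved'' are exactly the content of the theorem, and neither is supplied. For the reduction error, you predict that the bound $|\lambda_k-\Lambda_k|\le\eta_k(h,t)$ will be governed by the cross term $\tfrac{g'}{g}s\,\partial_s v$, i.e.\ by $\psi'/\psi\sim 1/x$ at the corners, and that weighted corner estimates will be needed; in fact no such analysis is required, and no explicit $\eta_k$ is produced in your text. The paper's lower bound (Lemmas \ref{lem:projection-error-estimate} and \ref{thm:ch-estimation}, following \cite{liu2015framework}) expands $u=\sum_n\hat u_n(x)\phi_n(y;x)$ in the transverse sine modes and observes that the $x$-derivative contributions of the $n\ge 2$ components enter the shifted energy with a \emph{positive} sign and may simply be dropped; the only inequality used is $\left(\tfrac{n\pi}{h(x)}\right)^2-\tfrac{\pi^2}{t^2}\ge\tfrac{3\pi^2}{t^2}$, giving the explicit projection estimate $\|u-\widetilde P u\|_{L^2}\le \tfrac{t^{1/3}}{\sqrt3\,\pi}\|u-\widetilde P u\|_{\tilde a}$ and hence $\mu_k^t\ge \hat\mu_k^t/\bigl(1+t^{2/3}\hat\mu_k^t/(3\pi^2)\bigr)$ for the shifted, scaled eigenvalues. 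Some estimate of this explicit kind is indispensable, and your proposal stops before providing one.

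The second gap is the assertion that $\Lambda_3-\Lambda_2\gtrsim h^{-4/3}$ ``with an absolute constant'' via the expansion $\Lambda_k=\pi^2/h^2+A^{2/3}\mu_k+\cdots$. This is precisely the kind of asymptotic statement the paper points out is insufficient (see the footnote to Lemma \ref{thm:bonafos}): the expansion carries no explicit threshold in $h$ and no uniformity in the apex position, and near the degenerate positions the slope of the limiting potential blows up, so ``absolute constant'' cannot be asserted without further work. The paper closes this by (i) monotonicity of the potential $V(t,x)$ in $t$ (Lemma \ref{thm:monotonicity-t}), which sandwiches the one-dimensional eigenvalues between the Airy-limit values $\bar\mu_k(s)$ and their values at the single threshold height $t_0$, so that one computation at $t_0$ covers the entire collapsing range $t\in(0,t_0]$, and (ii) verified computation: an interval Rayleigh--Ritz bound $\hat\mu_2^{t_0}(s)\le 21.091$ and a certified enclosure of the third root of the Airy secular equation yielding $\bar\mu_3(s)\ge 23.5$ uniformly in $s\in[0,1)$ (using analyticity of $s\mapsto\kappa_3(s)$ to exclude branch crossing), giving $\mu_2^t(s)\le 21.091<21.149\le\mu_3^t(s)$ on all of $\Omega_{\text{down}}$. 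Your concluding step, ``check that the $h^{-4/3}$ gap beats $\eta_2+\eta_3$'' with a numerical hand-off on a compact parameter subregion, does not resolve either issue, because the regime $h\to0$---where no compactness is available and explicit constants uniform in both parameters are required---is exactly the case the theorem is about.
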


To achieve this, we investigated the behavior of eigenvalues over collapsing triangles.
As a domain collapses, Dirichlet eigenvalues diverge to infinity, and their asymptotic behavior depends on the geometry of the domain and the perturbations driving its collapse.

Several studies have explored the behavior of eigenvalues of collapsing domains. Jerison \cite{jerison1991first} investigated the nodal line of eigenfunctions of the Dirichlet Laplacian, demonstrating that the nodal line of the second Dirichlet eigenfunction touches the boundary in a collapsing convex domain. In \cite{jerison1995diameter}, it is shown that the nodal line is located near the zero of an associated ordinary differential equation, with estimates for the first and second eigenvalues derived in terms of the eigenvalues of an ordinary differential operator.  Friedlander and Solomyak \cite{friedlander2009spectrum} analyzed the asymptotic behavior of Dirichlet eigenvalues for strips with a specific width profile as they collapse.

\medskip
\medskip

Below, we reference the results obtained by Ourmières-Bonafos \cite{ourmieres2015dirichlet}.

For $s\in(-1,1)$ and $t>0$, let $T(s,t)$ be the triangular domain with vertices $(-1,0),(1,0)$ and $(s,t)$; see Figure \ref{fig:shape-of-triangle-Tst}.
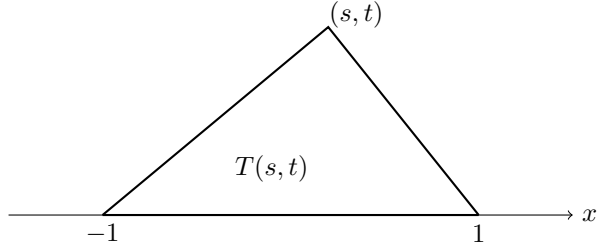
\begin{figure}[H]
    \centering
\begin{tikzpicture}[scale=2.5]
    \draw[thick] (-1, 0) -- (1, 0);
    
    \coordinate (shifted_vertex) at (0.2, 1);
    
    \draw[thick] (-1, 0) -- (shifted_vertex) -- (1, 0);
    
    \node at (0.35, 1.07) {$(s,t)$};
    
    \node at (-1, -0.1) {$-1$};
    \node at (1, -0.1) {$1$};
    
    \node at (-0.1, 0.25) {$T(s,t)$};
    
    \draw[->] (-1.5, 0) -- (1.5, 0) node[right] {$x$};
\end{tikzpicture}
    \caption{Shape of triangle $T(s,t)$}
    \label{fig:shape-of-triangle-Tst}
\end{figure}

Ourmi\`eres-Bonafos derived the following asymptotic expansions for the eigenvalues on collapsing triangles:
\begin{lemma}{[\cite{ourmieres2015dirichlet}, Theorem 1.2, Proposition 2.4]}
\label{thm:bonafos}
The $k$-th Dirichlet eigenvalue over $T(s,t)$, denoted by $\lambda_k(s,t)$, admits the following expansion:
\begin{equation}\label{eq:bonafos}
\lambda_k(s,t)\sim t^{-2} \left( \pi^2 + (2\pi^2)^{2/3} \kappa_k(s) t^{2/3} + \beta_{3,k} t + \cdots \right)~~(t\to 0+).
\end{equation}
Here, $\kappa_k(s)~~(k=1,2,\cdots)$ are the $k$-th eigenvalues of the Schr\"{o}dinger operator $l_s^{\text{mod}}$ defined in $H^2(\mathbb{R})$. Moreover, each eigenvalue of $l_s^{\text{mod}}$ is simple, and the functions $s\mapsto \kappa_k(s)$ are analytic on $(-1,1)$ for all $k=1,2,\cdots$. For the definition of $l_s^\text{mod}$, see \eqref{eq:def-of-l-s-mod} in \ref{appendix-airy}.
\end{lemma}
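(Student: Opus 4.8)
The plan is to derive \eqref{eq:bonafos} by a dimensional-reduction (adiabatic / Born--Oppenheimer) argument that collapses the two-dimensional Dirichlet problem on $T(s,t)$ to an effective one-dimensional Schr\"odinger operator, and then to read off the first two terms by a matched-asymptotics computation near the apex $(s,t)$. First I would straighten the domain. Writing the upper boundary of $T(s,t)$ as $y=t\,\phi_s(x)$, where $\phi_s$ is the tent profile $\phi_s(x)=(x+1)/(s+1)$ on $[-1,s]$ and $(1-x)/(1-s)$ on $[s,1]$ (so $\max\phi_s=\phi_s(s)=1$), I change variables $(x,y)\mapsto(x,\eta)$ with $\eta=y/(t\,\phi_s(x))$. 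This maps $T(s,t)$ onto the fixed rectangle $(-1,1)\times(0,1)$ and carries the Dirichlet Laplacian to a $t$-dependent operator whose dominant transverse part is $-t^{-2}\phi_s(x)^{-2}\partial_\eta^2$ with Dirichlet data at $\eta=0,1$.

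Next I would separate scales. The transverse operator $-\phi_s^{-2}\partial_\eta^2$ on $(0,1)$ has ground state $\sqrt2\sin(\pi\eta)$ with eigenvalue $\pi^2\phi_s(x)^{-2}$ and an order-one gap to the next mode; projecting onto this ground state yields the effective operator
\[
H_{\mathrm{eff}}=-\partial_x^2+\frac{\pi^2}{t^2}\,\phi_s(x)^{-2}+(\text{lower-order terms}),
\]
the coupling to excited transverse modes being suppressed by the gap $3\pi^2/t^2$ and contributing only to the $\beta_{3,k}t$ term and beyond. The effective potential attains its minimum $\pi^2$ at the apex $x=s$, where $\phi_s$ has a corner with one-sided slopes $-1/(1-s)$ and $+1/(1+s)$, so $\phi_s^{-2}-1$ is, to leading order, the asymmetric V-shape $2(1-s)^{-1}(x-s)_+ +2(1+s)^{-1}(s-x)_+$. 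Rescaling $x-s=(2\pi^2)^{-1/3}t^{2/3}\xi$ balances $-\partial_x^2$ against this linear well and turns $H_{\mathrm{eff}}-\pi^2/t^2$ into $(2\pi^2)^{2/3}t^{-4/3}\,l_s^{\mathrm{mod}}$, with $l_s^{\mathrm{mod}}$ exactly the model operator of Lemma~\ref{thm:bonafos}; its $k$-th eigenvalue $\kappa_k(s)$ then produces $\lambda_k(s,t)=\pi^2t^{-2}+(2\pi^2)^{2/3}\kappa_k(s)t^{-4/3}+\cdots$, i.e.\ \eqref{eq:bonafos}. To make this rigorous I would prove the upper bound by min--max with explicit trial states (ground transverse mode times the rescaled $k$-th eigenfunction of $l_s^{\mathrm{mod}}$, cut off), and the matching lower bound by Dirichlet--Neumann bracketing together with Agmon exponential-decay estimates that confine the true eigenfunctions to an $O(t^{2/3})$-neighbourhood of the apex; the $\beta_{3,k}t$ coefficient follows by carrying the change-of-variables expansion and the adiabatic correction one order further.

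For the structural properties of $\kappa_k(s)$, simplicity is the classical one-dimensional Sturm--Liouville fact: $l_s^{\mathrm{mod}}$ is a Schr\"odinger operator on $\mathbb{R}$ with a confining potential, hence has purely discrete spectrum, and a Wronskian argument shows that for each eigenvalue the space of decaying $L^2$-solutions of the associated second-order ODE is one-dimensional, so every eigenvalue is simple. Analyticity of $s\mapsto\kappa_k(s)$ then follows from Kato--Rellich analytic perturbation theory: the slope coefficients $1/(1\mp s)$ of the potential are analytic on $(-1,1)$, so $s\mapsto l_s^{\mathrm{mod}}$ is an analytic family of type (A), and simplicity upgrades the eigenvalues (and their projections) to analytic functions of $s$.

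The main obstacle is the rigorous error control in the adiabatic reduction. Because $\phi_s$ is only Lipschitz, the effective potential is non-smooth precisely where the eigenfunctions concentrate, so the standard smooth Born--Oppenheimer machinery does not apply verbatim; one must treat the two-sided Airy matching at the corner directly and bound the remainder uniformly in $t$. Quantifying the off-diagonal coupling to the transverse excited modes sharply enough to reach the $t$-order term, rather than merely the $t^{2/3}$-order term, is the most delicate part of the estimate.
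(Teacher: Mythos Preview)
The paper does not prove this lemma at all; it is quoted verbatim from Ourmi\`eres-Bonafos (Theorem~1.2 and Proposition~2.4 of \cite{ourmieres2015dirichlet}) and used as a black box, so there is no in-paper argument to compare your proposal against. Your sketch is a reasonable high-level outline of the adiabatic/Born--Oppenheimer strategy that the cited reference indeed follows, and your arguments for simplicity (one-dimensional Wronskian) and analyticity (Kato--Rellich for a type~(A) family) are the standard correct ones.

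One contextual remark: the present paper deliberately avoids relying on the asymptotic expansion \eqref{eq:bonafos} for its main result, precisely because of the difficulty you identify in your last paragraph---the expansion furnishes no explicit $t_0$. Instead the authors replace it by the quantitative two-sided bound of Theorem~\ref{thm:eigenvalue_estimate-cor}, and from Lemma~\ref{thm:bonafos} they use only the simplicity and analyticity of $\kappa_k(s)$ (to track the branch of $\kappa_3$ in Section~\ref{sec:estimation-kappa}) together with the implicit Airy equation of Lemma~\ref{thm:airy-eigen}. So while your outline is an honest summary of what a full proof of the cited result would entail, none of the hard analysis you describe---Agmon estimates, bracketing, control of the $\beta_{3,k}t$ term---is reproduced or needed in this paper.
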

The symbol \(\sim\) in \eqref{eq:bonafos} represents the convergence defined below.
\begin{notation}
We write
\begin{equation}
\lambda_k(s, t) \sim \sum_{j \geq 0} \Gamma_j(s) t^{j/3}~~(t \to 0+)
\end{equation}
if, for any $J \in \mathbb{N}$, there exist $C_J(s) > 0$ and $t_0(s) > 0$ such that for all $t \in (0, t_0(s))$, 
\begin{equation}
\left| \lambda_k(s, t) - \sum_{j=0}^J \Gamma_j(s) t^{j/3} \right| \leq C_J(s) t^{(J+1)/3}.
\end{equation}
Here, the $\Gamma_j$'s are the coefficients of the series.
\end{notation}

\medskip

While Lemma \ref{thm:bonafos} shows that $\lambda_2$ and $\lambda_3$ are separated in the limit case of a completely collapsed triangle, it does not establish their simplicity for small $t$ with a given value\footnote{Let $\mu_2 := \lim_{t\to 0+}t^{4/3}(\lambda_2(s,t)-\pi^2/t^2)$ and $\mu_3 := \lim_{t\to 0+}t^{4/3}(\lambda_3(s,t)-\pi^2/t^2)$. Since Lemma \ref{thm:bonafos} shows that the second and third eigenvalues of the Schr\"{o}dinger operator $l_s^{\text{mod}}$ are simple, we have $\mu_2 < \mu_3$. Thus, there exists $t_0 > 0$ such that
$$t^{\frac{4}{3}}\left(\lambda_2(s,t)-\frac{\pi^2}{t^2}\right) < \frac{\mu_2+\mu_3}{2} < t^{\frac{4}{3}}\left(\lambda_3(s,t)-\frac{\pi^2}{t^2}\right)\text{  for all $ t \in (0,t_0]$.}$$ 
 However, this argument does not provide an explicit value of $t_0$.}.
To overcome this limitation, we derive the following explicit  estimates for the $k$-th Dirichlet eigenvalues on the collapsing triangle:
letting $t_0=\tan(\pi/60)/2$, 
\begin{equation}\label{eq:estimation-for-mu-t-intro}
\frac{\bar{\mu}_k(s)}{1 + t_0^\frac{2}{3}/(3\pi^2) \bar{\mu}_k(s)}
\leq t^{\frac{4}{3}}\left(\lambda_k(s,t) - \frac{\pi^2}{t^2}\right) \leq \hat\mu_k^{t_0}(s) \quad \text{for } t \in (0, t_0]~~~(k=1,2,\cdots),
\end{equation}
where $\bar\mu_k(s)$ is the $k$-th eigenvalue of another Schr\"{o}dinger operator on $\mathbb{R}$, and $\hat\mu_k^{t_0}(s)$ is the $k$-th eigenvalue of a Schr\"{o}dinger operator over a bounded interval; see the definitions in \eqref{eq:problem2} and \eqref{eq:problem4}. 
It is worth pointing out that the values or bounds of the involved eigenvalues are all computable by utilizing the recently developed methods for rigorous eigenvalue estimation \cite{liu2024guaranteed}.

The estimation for eigenvalues 
in \eqref{eq:estimation-for-mu-t-intro} allows us to separate $\lambda_2(s,t)$ and $\lambda_3(s,t)$ for $t \in (0, t_0]$, confirming the simplicity of the second eigenvalue for nearly degenerate triangles. 

\medskip

The remainder of this paper is organized as follows. Section 2 introduces three eigenvalue problems to be used in the derivation of the inequality \eqref{eq:estimation-for-mu-t-intro}. Section 3 establishes inequality \eqref{eq:estimation-for-mu-t-intro}. 
Section 4 provides a computer-assisted proof for Conjecture \ref{len:main-conjecture}. Section 5 concludes the paper by summarizing our results.  The code for the computer-assisted proof is available at \url{https://ganjin.online/ryoki/DirichletSimplicityCollapse}.

\section{Preliminary}\label{sec:preliminary}

We introduce the standard notation for Sobolev spaces to begin our discussion. Let $T \subset \mathbb{R}^2$ be a triangular domain. The space $L^2(T)$ denotes all real-valued square-integrable functions on $T$, and $H^1(T)$ represents the space of all functions in $L^2(T)$ whose weak derivatives are also in $L^2(T)$. Additionally, $H^1_0(T)$ is the subspace of $H^1(T)$ consisting of functions that vanish on the boundary of $T$. The $L^2$-norm of $v \in L^2(T)$ is denoted by $\|v\|_{T}$, and the inner product in $L^2(T)$ or $(L^2(T))^2$ is written as $(\cdot,\cdot)_T$. The gradient operator for functions in $H^1(T)$ is denoted by $\nabla$. Note that $(\nabla\cdot,\nabla\cdot)_T$ defines an inner product on $H^1_0(T)$ due to the imposed boundary conditions.

The weak formulation of the Dirichlet eigenvalue problem of Laplacian is to find $u \in H^1_0(T) \setminus \{0\}$ and $\lambda > 0$ such that 
\begin{equation}
\label{eq:eigenvalue-problem}
 (\nabla u, \nabla v)_T = \lambda (u, v)_T \quad \forall v \in H^1_0(T).
\end{equation}
 Since the inverse of the Laplacian is a compact and self-adjoint operator, the spectral theorem guarantees that the problem \eqref{eq:eigenvalue-problem} has a countably infinite set of eigenvalues, which can be arranged as $0 < \lambda_1(T) < \lambda_2(T) \leq \lambda_3(T) \leq \cdots$. 

\begin{figure}[H]
    \centering
\begin{tikzpicture}[scale=2.5]
    \draw[thick] (-1, 0) -- (1, 0);
    
    \coordinate (shifted_vertex) at (0.2, 1);
    
    \draw[thick] (-1, 0) -- (shifted_vertex) -- (1, 0);
    
    \node at (0.35, 1.07) {$(s,t)$};
    
    \node at (-1, -0.2) {$-1$};
    \node at (1, -0.2) {$1$};
    
    \draw[dashed] (-0.5, 0) -- (-0.5, 0.42);
    \node at (-0.3, 0.25) {$h(x)$};
    
    \node at (-0.5, -0.2) {$x$};
    
    \draw[->] (-1.5, 0) -- (1.5, 0) node[right] {$x$};
\end{tikzpicture}
    \caption{Shape of triangle $T^t$}
    \label{fig:shape-of-triangle}
\end{figure}
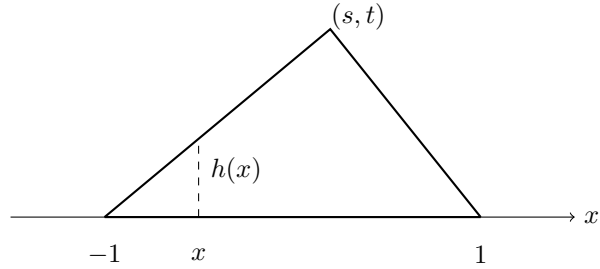

For simplicity of notation, denote by $T^t$ the triangular domain $T(s,t)$ for a fixed $s\in (-1,1)$.
The function \( h(x) \) on the interval \( I=[-1,1] \) is given by  
\begin{equation}
h(x) = 
\begin{cases}
\frac{t}{1+s}(x + 1) & \text{for } x \in [-1, s], \\
-\frac{t}{1-s}(x - 1) & \text{for } x \in [s, 1],
\end{cases}
\end{equation}  
which defines the shape of the triangle $T^t$; see Figure \ref{fig:shape-of-triangle}. The weak derivative of $h(x)$ is denoted by $h'(x)$.
Also, define the scaled interval $I^t = \left[{-t^{-2/3} (1+s)},{t^{-2/3} (1-s)}\right]$.

\medskip

Let us consider the asymptotic behavior of the eigenvalue $\lambda^t_k$ as the height $t$ approaches zero, treating the interval $I$ as the base of $T^t$. 
To obtain the inequality \eqref{eq:estimation-for-mu-t-intro}, we consider 3 eigenvalue problems for the shifted and scaled Laplacian. The relation among these eigenvalues will be discussed in Section 3 and summarized in Table \ref{tab:relations-eigenvalues}.

\begin{problem}
Find \( u \in H^1_0(T^t) \) and \( \mu > 0 \) such that
\begin{equation} \label{eq:problem1}
t^{4/3} \left[ (\nabla u, \nabla v)_{T^t} - \dfrac{\pi^2}{t^2} (u, v)_{T^t} \right] = \mu (u, v)_{T^t} \quad \text{for all } v \in H^1_0(T^t).
\end{equation}
\end{problem}

For \( t > 0 \), let \( \mu^t_k \) denote the \( k \)-th eigenvalue of Problem 1. Then, we have 
 \begin{equation}
\label{eq:relation_mu_and_lambda}
    \mu_k^t=t^{4/3}\left(\lambda_k^t-\frac{\pi^2}{t^2}\right),~ \text{or }~ 
     \lambda_k^t  = t^{-4/3}\left(\mu_k^t + \frac{\pi^2}{t^2}\right) .
\end{equation}
From the asymptotic expansion \eqref{eq:bonafos} and 
the above relation between $\mu_k^t$ and $\lambda_k^t$, the asymptotic behavior 
of eigenvalue $\mu_k^t$ is 
$$
\mu_k^t \to  (2\pi^2)^{2/3}\kappa_k\text{ as } 
 t \to 0+ .
$$

\medskip

The essential behavior of eigenvalues of Problem 1 will be studied through the one of Problem 2, which is defined on a one-dimensional interval.

\begin{problem}
Find \( \hat{u} \in H^1_0(I^t) \) and \( \hat{\mu} > 0 \) such that
\begin{align} \label{eq:problem2}
\int_{I^t}
\hat u'\hat v'~dx +  \int_{I^t}V(t,x)\hat{u}\hat{v}    ~dx 
 = \hat{\mu} \int_{I^t}\hat{u} \hat{v} ~ dx\quad \text{for all } \hat{v} \in H^1_0(I^t),
\end{align}
where
\begin{equation}\label{def-V-t-x}
    V(t,x):=t^{4/3}\left( \dfrac{ \pi^2 }{h(t^{2/3} x+s)^2 } + \dfrac{ (3 + 4\pi^2) h'(t^{2/3} x+s)^2 }{ 12  h(t^{2/3} x+s)^2 } - \dfrac{ \pi^2 }{ t^2 }\right )~~(x\in I^t).
\end{equation}
\end{problem}
Let \( \hat{\mu}^t_k \) denote the \( k \)-th eigenvalue of Problem 2.
This problem was proposed in \cite{friedlander2009spectrum} to analyze the asymptotic behavior of eigenvalues over narrow strips.
Note that an upper bound for $\hat{\mu}^t_k$ can be easily obtained using the Rayleigh--Ritz method.
  

Lemma \ref{lem:problem1-2} tells that 
Problem 2 is obtained by restricting the function space \(H^1_0(T^t)\) in Problem 1 to the subspace \(W(T^t)\) defined by  
\[
W(T^t) := \left\{ v(x) \sqrt{\frac{2}{h(x)}} \sin\left(\frac{\pi y}{h(x)}\right) \,\middle|\, v \in H^1_0(I) \right\}~(\subset H^1_0(T^t)).
\]

\begin{lemma}\label{lem:problem1-2} The pair \((\hat{\mu}, \hat{u})\) is an eigen-pair of Problem 2 if and only if \[ \left(\hat{\mu},\, \hat{u}\Bigl(t^{-2/3}(x-s)\Bigr)\sqrt{\frac{2}{h(x)}} \sin\left(\frac{\pi y}{h(x)}\right)\right) \] is an eigen-pair of the following eigenvalue problem:  Find $  u \in W(T^t)$ and  $\hat{\mu} > 0$ such that 
\[t^{4/3} \left[ (\nabla u, \nabla v)_{T^t} - \frac{\pi^2}{t^2} (u, v)_{T^t} \right] = \hat{\mu} (u, v)_{T^t} \quad \text{for all } v \in W(T^t).\]
\end{lemma}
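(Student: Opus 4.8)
The plan is to reduce the claimed equivalence to an explicit separation-of-variables computation combined with the affine change of variable $x = t^{2/3}\xi + s$. Write $T^t = \{(x,y) : -1 < x < 1,\ 0 < y < h(x)\}$ and, for $x \in I$, put $\psi(x,y) := \sqrt{2/h(x)}\,\sin(\pi y/h(x))$, so that a generic element of $W(T^t)$ is $v\psi$ with $v \in H^1_0(I)$. Two preliminary observations organize the argument: the map $v \mapsto v\psi$ is a linear bijection of $H^1_0(I)$ onto $W(T^t)$ (injectivity, e.g., by evaluating at $y = h(x)/2$), and $\hat u \mapsto \hat u\big(t^{-2/3}(\,\cdot\, - s)\big)$ is a linear bijection of $H^1_0(I^t)$ onto $H^1_0(I)$; under the composition, the eigenfunction in the statement corresponds to $\hat u \mapsto \hat u\big(t^{-2/3}(x-s)\big)\psi(x,y)$. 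Hence it suffices to rewrite $(v\psi,\phi\psi)_{T^t}$ and $(\nabla(v\psi),\nabla(\phi\psi))_{T^t}$ in terms of one-dimensional integrals of $v,\phi$ and then rescale.

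First I would integrate out $y$ for each fixed $x$, which needs exactly three identities: $\int_0^{h(x)}\psi^2\,dy = 1$; $\int_0^{h(x)}\psi\,\psi_x\,dy = 0$ (differentiate the constant $\int_0^{h(x)}\psi^2\,dy \equiv 1$ under the integral sign and use $\psi(x,0)=\psi(x,h(x))=0$); and $\int_0^{h(x)}(\psi_x^2+\psi_y^2)\,dy = P(x)$, where
\[
P(x) \;:=\; \frac{\pi^2}{h(x)^2} \;+\; \frac{(3+4\pi^2)\,h'(x)^2}{12\,h(x)^2}.
\]
The term $\pi^2/h^2$ comes from $\int_0^h\psi_y^2\,dy$, and the second term from $\int_0^h\psi_x^2\,dy$ after substituting $\psi_x = \sqrt2\,h'\big(-\tfrac12 h^{-3/2}\sin(\pi y/h) - \pi y\,h^{-5/2}\cos(\pi y/h)\big)$ and evaluating the elementary integrals $\int_0^h y\sin(\pi y/h)\cos(\pi y/h)\,dy = -h^2/(4\pi)$ and $\int_0^h y^2\cos^2(\pi y/h)\,dy = h^3/6 + h^3/(4\pi^2)$. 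Since $\nabla(v\psi) = (v'\psi + v\psi_x,\ v\psi_y)$, the cross terms $v'\phi\,\psi\psi_x$ and $v\phi'\,\psi\psi_x$ drop out after the $y$-integration, and one obtains
\[
(v\psi,\phi\psi)_{T^t} = \int_I v\phi\,dx, \qquad
(\nabla(v\psi),\nabla(\phi\psi))_{T^t} = \int_I \big(v'\phi' + P(x)\,v\phi\big)\,dx.
\]

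Finally I would substitute $x = t^{2/3}\xi + s$ (so $I \mapsto I^t$, $dx = t^{2/3}d\xi$), writing $\hat v(\xi) := v(t^{2/3}\xi+s)$ so that $v'(x) = t^{-2/3}\hat v'(\xi)$. Collecting powers of $t$ and using that, by the definition \eqref{def-V-t-x} of $V$, one has $t^{4/3}\big(P(t^{2/3}\xi+s) - \pi^2/t^2\big) = V(t,\xi)$, this yields
\[
t^{4/3}\Big[(\nabla(v\psi),\nabla(\phi\psi))_{T^t} - \tfrac{\pi^2}{t^2}(v\psi,\phi\psi)_{T^t}\Big]
= t^{2/3}\!\left(\int_{I^t}\hat v'\hat\phi'\,d\xi + \int_{I^t} V(t,\xi)\,\hat v\hat\phi\,d\xi\right),
\]
while $(v\psi,\phi\psi)_{T^t} = t^{2/3}\int_{I^t}\hat v\hat\phi\,d\xi$. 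Dividing the $W(T^t)$-eigenvalue identity by the common factor $t^{2/3}$ turns it into precisely the weak form \eqref{eq:problem2} of Problem 2 tested over all $\hat\phi \in H^1_0(I^t)$, and every step is reversible, which gives the equivalence. The main obstacle is simply the clean evaluation of $\int_0^h\psi_x^2\,dy$, the one nonroutine integral; a secondary point worth a line is that $h$ is only piecewise $C^1$, but since the single differentiation in $x$ occurs under the integral sign and is needed only for a.e.\ $x$, and since $V(t,\cdot)\ge 0$ (because $h \le t$ on $I$) so the form in \eqref{eq:problem2} is well defined on $H^1_0(I^t)$, no further regularity care is required.
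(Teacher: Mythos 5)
Your proposal is correct and follows essentially the same route as the paper's own proof (Appendix, Lemma \ref{lem:problem1-2-appendix}): separate variables with the transverse mode $\sqrt{2/h}\,\sin(\pi y/h)$, integrate out $y$ to obtain the effective potential $\pi^2/h^2+(3+4\pi^2)h'^2/(12h^2)$, then apply the change of variables $x=t^{2/3}\xi+s$ and cancel the common power of $t$, with reversibility giving the equivalence. Your bookkeeping of the common factor ($t^{2/3}$) and the explicit evaluation of $\int_0^h\psi_x^2\,dy$ are accurate, so no changes are needed.
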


\begin{proof}
    The proof is provided in Appendix Lemma \ref{lem:problem1-2}.
\end{proof}

In the limit as \( t \to 0+ \) in Problem 2, we obtain the following problem:
\begin{problem}
Find \( \bar{u} \in H^1(\mathbb{R}) \) and \( \bar{\mu} > 0 \) such that
\begin{equation} \label{eq:problem4}
\int_{-\infty}^{\infty} \bar{u}' \bar{v}' \, dx +  \int_{-\infty}^{\infty} V(x) \bar{u} \bar{v} \, dx = \bar{\mu} \int_{-\infty}^{\infty} \bar{u} \bar{v} \, dx \quad \text{for all } \bar{v} \in H^1(\mathbb{R}),
\end{equation}    
where
\begin{equation}\label{eq:def-of-V}
V(x):=2\pi^2\left(\frac{1}{1-s}\mathbf{1}_{\mathbb{R}_+}+\frac{1}{1+s}\mathbf{1}_{\mathbb{R}_-}\right)|x|.
\end{equation}
Here, $\mathbf{1}_{A}$ denotes the indicator function of a set $A(\subset \mathbb{R})$. The \( k \)-th eigenvalue of Problem 3 is denoted by \( \bar{\mu}_k \).
\end{problem}

Upper and lower bounds of $\bar{\mu}_k$ with concrete values will be obtained using known facts for the Schr\"{o}dinger operator with the Airy function; see Lemma \ref{thm:airy-eigen}.

In the next section, we establish the following inequality for the eigenvalues \( \mu^t_k \), \( \hat{\mu}^t_k \), and \( \bar{\mu}_k \):
\begin{equation}\label{eq:upper-lower-mut}
\frac{\bar{\mu}_k}{1 + t_0^\frac{2}{3} / (3\pi^2) \bar{\mu}_k}
\leq \mu_k^t \leq \hat\mu_k^{t_0} \quad \text{for } 0 < t \leq t_0.
\end{equation}
By applying this inequality, we separate \( \lambda^t_2 \) and \( \lambda^t_3 \) over nearly degenerate triangles, thereby resolving Conjecture \ref{len:main-conjecture}. 

\section{Upper and lower bounds for \( \mu^t_k \)}

First, we establish the upper bound \(\mu_k^t \leq \hat{\mu}_k^{t_0}\) in \eqref{eq:upper-lower-mut} by using the min-max principle and the monotonicity of \(\hat{\mu}_k^t\) of Problem 2  with respect to \(t\). Then, we obtain a lower bound estimate for $\mu^t_k$, by using the projection error estimate for the projection from \(H^1_0(T^t)\) onto \(W(T^t)\).

\subsection{Upper bound for \( \mu^t_k \)}

Note that the eigenvalues of Problem 2 can be characterized by the one defined on \( W(T^t) \) through Lemma \ref{lem:problem1-2}.  
From the min-max principle, $W(T^t)\subset H^1_0(T^t)$ implies $\mu_k^t \le \hat\mu_k^{t}$.


\medskip

Denote by $R_t$ and $R$ the Rayleigh quotients for Problems 2 and 3, using $V(t,x)$ in \eqref{def-V-t-x} and $V(x)$ in \eqref{eq:def-of-V}, respectively. That is, 
\begin{equation}\label{eq:def-rt}
R_t[u]
\;:=\;\frac{\displaystyle \int_{I^{t}} \Bigl(\lvert u'(x)\rvert^2 
            + V(t,x)\,\lvert u(x)\rvert^2\Bigr)\,dx}
      {\displaystyle \int_{I^{t}} \lvert u(x)\rvert^2\,dx}~~(u\in H^1_0(I^t)), 
\end{equation}
and
\[
R[u]
\;:=\;\frac{\displaystyle \int_{\mathbb{R}} \Bigl(\lvert u'(x)\rvert^2 
            + V(x)\,\lvert u(x)\rvert^2\Bigr)\,dx}
      {\displaystyle \int_{\mathbb{R}} \lvert u(x)\rvert^2\,dx}~~(u\in H^1(\mathbb{R})).
\]

Let us first confirm the monotonicity and the convergence of $V(t,x)$ with respect to $t$.

\begin{lemma}\label{lem:monotonicity}
    For a fixed $x\in I^t$, we have the following properties about the potential $V(t,x)$
    \begin{description}
        \item[(i)]  $V(t,x)$ is monotonically increasing with respect to $t$.
        \item[(ii)]
        $\displaystyle \lim_{t\to 0+}V(t,x)=V(x)$.
    \end{description}
     
\end{lemma}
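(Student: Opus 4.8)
\emph{Proof proposal.}
The plan is to reduce both statements to a single closed-form expression for $V(t,x)$. Fix $x$; by symmetry (interchanging the roles of $1-s$ and $1+s$) it suffices to treat $x\ge 0$, in which case $t^{2/3}x+s\in[s,1]$ and the second branch of the definition of $h$ applies. A direct computation gives $h(t^{2/3}x+s)=-\tfrac{t}{1-s}\bigl(t^{2/3}x+s-1\bigr)=t\,w(t)$ with $w(t):=1-\theta t^{2/3}$, $\theta:=x/(1-s)\ge 0$, and $h'(t^{2/3}x+s)=-\tfrac{t}{1-s}$, so $h'(t^{2/3}x+s)^2=\rho^2 t^2$ with $\rho:=1/(1-s)$. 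Moreover $x\in I^t$ is equivalent to $\theta t^{2/3}\le 1$, i.e.\ $w(t)\in[0,1]$, and $w(t)>0$ precisely when $t^{2/3}x+s$ is an interior point of $[-1,1]$, equivalently $h(t^{2/3}x+s)>0$, which is exactly the regime in which $V(t,x)$ is defined.

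Substituting these expressions into \eqref{def-V-t-x} and using $1-w(t)^2=\bigl(1-w(t)\bigr)\bigl(1+w(t)\bigr)=\theta t^{2/3}\bigl(1+w(t)\bigr)$, the two $\pi^2/t^2$ contributions combine and one obtains the closed form
\begin{equation}
V(t,x)=\pi^2\theta\,\frac{1+w(t)}{w(t)^2}+\frac{(3+4\pi^2)\rho^2}{12}\cdot\frac{t^{4/3}}{w(t)^2}.
\end{equation}

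For (i): since $t\mapsto t^{2/3}$ is increasing, $w(t)$ is nonincreasing in $t$ (strictly decreasing when $x\ne 0$); the scalar map $\phi(w):=(1+w)/w^2$ satisfies $\phi'(w)=-(w+2)/w^3<0$ on $(0,\infty)$, so $t\mapsto\phi\bigl(w(t)\bigr)$ is nondecreasing; and $t\mapsto t^{4/3}/w(t)^2$ is a product of the nonnegative nondecreasing functions $t^{4/3}$ and $w(t)^{-2}$, hence nondecreasing. As both prefactors $\pi^2\theta$ and $(3+4\pi^2)\rho^2/12$ are nonnegative (the latter positive), the closed form shows $V(t,x)$ is monotonically increasing in $t$; for $x=0$ one has $w\equiv 1$ and $V(t,0)=\tfrac{(3+4\pi^2)\rho^2}{12}t^{4/3}$, which is increasing as well. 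For (ii): letting $t\to 0+$ gives $w(t)\to 1$, whence $\phi\bigl(w(t)\bigr)\to 2$ and $t^{4/3}/w(t)^2\to 0$, so $V(t,x)\to 2\pi^2\theta=\tfrac{2\pi^2}{1-s}|x|$, which is exactly $V(x)$ from \eqref{eq:def-of-V} restricted to $\mathbb{R}_+$; the computation on $\mathbb{R}_-$ is identical with $1+s$ in place of $1-s$. Hence $\lim_{t\to 0+}V(t,x)=V(x)$.

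I do not expect any essential obstacle here: the argument is a direct computation once $V(t,x)$ is written in closed form. The only points requiring care are the bookkeeping of the piecewise formula for $h$ and the observation that the admissibility constraint $x\in I^t$ keeps $w(t)$ (equivalently $h(t^{2/3}x+s)$) positive, so that the displayed closed form is valid throughout the range of $t$ under consideration.
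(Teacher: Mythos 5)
Your proposal is correct, and it takes a somewhat different route from the paper. The paper proves (i) by differentiating $V(t,x)$ in $t$ on each branch of $h$ and checking that the resulting expression is nonnegative for $x\in I^t$ (and disposes of (ii) as a "simple computation"), whereas you first substitute $h(t^{2/3}x+s)=t\,w(t)$, $w(t)=1-\theta t^{2/3}$, to obtain the closed form $V(t,x)=\pi^2\theta\,(1+w)/w^2+\tfrac{(3+4\pi^2)\rho^2}{12}\,t^{4/3}/w^2$, from which (i) follows by monotonicity of the elementary pieces (with $\phi(w)=(1+w)/w^2$ decreasing on $(0,\infty)$ and $w(t)$ nonincreasing) and (ii) follows immediately by letting $w(t)\to 1$; I checked the algebra, including the cancellation of the $\pi^2/t^2$ terms via $1-w^2=\theta t^{2/3}(1+w)$, and it is right. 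Your version buys a cleaner argument: no need to compute and sign-check the messy $t$-derivative, both statements drop out of one formula, and the closed form makes the limit $2\pi^2\theta=2\pi^2|x|/(1\mp s)$ visibly equal to $V(x)$; the paper's derivative computation is more mechanical but states the monotonicity pointwise in the same generality. You also handle the bookkeeping points correctly — for fixed $x$ the branch of $h$ does not change with $t$, and the constraint $x\in I^t$ (interior) keeps $w(t)>0$ so the formula is valid; the only cosmetic discrepancy is that you assign $x=0$ to the right branch while the paper's derivative formula uses the left branch there, which is immaterial since $V(t,0)$ involves only $h(s)=t$ up to the choice of $h'$ at a single point, and in either convention the monotonicity and the limit $V(0)=0$ hold.
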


\begin{proof}


To show (i), let us compute the partial derivative of \( V(t, x) \) with respect to \( t \):

\begin{align}
    \frac{\partial}{\partial t}V(t,x) &= 
    \begin{cases}
    \displaystyle
        \frac{6x^2\pi^2t^{2/3}(3t^{-2/3}(1-s)-x) + (3 + 4 \pi^2)(1-s) t^{2/3}}{9 (t^{-2/3}(1-s)-x)^3t^{7/3}} & (x>0)\\
        \displaystyle
        \frac{6x^2\pi^2t^{2/3}(3t^{-2/3}(1+s)+x) + (3 + 4 \pi^2)(1 + s) t^{2/3}}{9 (t^{-2/3}(1+s)+x)^3t^{7/3}} & (x\leq 0)
    \end{cases}.
\end{align}
Since $x \in I^t = \left[{-t^{-2/3} (1+s)},{t^{-2/3} (1-s)}\right]$, it is easy to see
$\frac{\partial}{\partial t}V(t,x)\geq 0$. Thus,  $V(t,x)$ is monotonically increasing with respect to $t$.
The property (ii) follows from a simple computation.
\end{proof}

Using Lemma \ref{lem:monotonicity} regarding the potential $V(t,x)$, we obtain the following relations between the eigenvalues of Problems 2 and 3.
\begin{lemma}\label{thm:monotonicity-t}
For the $k$-th eigenvalues $\bar{\mu}_k$ and $\hat{\mu}_k^{t}$, we have
\begin{description}
    \item[(i)] $\bar{\mu}_k\leq\hat{\mu}_k^{t_1} \leq \hat{\mu}_k^{t_2}~~\text{for any $0<t_1\leq t_2$}.$
    \item[(ii)] $\lim_{t\to 0+}\hat{\mu}_k^t = \bar{\mu}_k.$
\end{description}

\end{lemma}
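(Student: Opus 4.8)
The plan is to derive both parts of Lemma~\ref{thm:monotonicity-t} from the variational (min-max) characterization of eigenvalues, using the pointwise monotonicity and convergence of the potential from Lemma~\ref{lem:monotonicity} together with the nesting of the underlying function spaces as $t$ varies. The key structural observation is that Problem~2 lives on the bounded interval $I^t = [-t^{-2/3}(1+s),\, t^{-2/3}(1-s)]$, whose length $2t^{-2/3}$ grows as $t$ decreases, so smaller $t$ means a larger interval; and Problem~3 lives on all of $\mathbb{R}$, the limiting ``infinite interval''. Throughout I identify $H^1_0(I^t)$ with the subspace of $H^1_0(I^{t'})$ (for $t' \le t$) consisting of functions supported in $I^t$, and likewise $H^1_0(I^t) \subset H^1(\mathbb{R})$ by extension by zero; on such extended functions $R_{t'}$ and $R$ agree with $R_t$ in the relevant range once the potential comparison is in place.

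For part (i), I would argue in two stages. First, the monotonicity $\hat\mu_k^{t_1} \le \hat\mu_k^{t_2}$ for $0 < t_1 \le t_2$: take a $k$-dimensional subspace $S \subset H^1_0(I^{t_2})$ nearly achieving $\hat\mu_k^{t_2}$ in the min-max principle. Since $t_1 \le t_2$ gives $I^{t_2} \subset I^{t_1}$, we may regard $S \subset H^1_0(I^{t_1})$ by zero-extension; for every $u$ in this subspace, $R_{t_1}[u] \le R_{t_2}[u]$ because the kinetic term is unchanged and, by Lemma~\ref{lem:monotonicity}(i), $V(t_1,x) \le V(t_2,x)$ pointwise (and on $I^{t_2}$ the integral of $V(t,\cdot)|u|^2$ only decreases when $t$ decreases; the extra part of $I^{t_1}$ contributes nothing since $u$ vanishes there). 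Hence $\hat\mu_k^{t_1} \le \sup_{u \in S\setminus\{0\}} R_{t_1}[u] \le \sup_{u\in S\setminus\{0\}} R_{t_2}[u]$, and taking the infimum over $S$ gives the claim. Second, the lower bound $\bar\mu_k \le \hat\mu_k^{t_1}$: any $k$-dimensional $S \subset H^1_0(I^{t_1})$ embeds into $H^1(\mathbb{R})$ by zero-extension, and on it $R[u] \le R_{t_1}[u]$ because, again by Lemma~\ref{lem:monotonicity} (monotonicity and the fact that $V(t,x) \downarrow V(x)$ as $t \downarrow 0$, so $V(x) \le V(t_1,x)$ for all $t_1 > 0$), the potential only decreases when we pass to $V(x)$; thus $\bar\mu_k = \inf_S \sup_{u\in S} R[u] \le \inf_S \sup_{u \in S} R_{t_1}[u] = \hat\mu_k^{t_1}$. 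One subtlety: I should note that both potentials grow at least linearly in $|x|$ (indeed $V(x)$ is exactly linear and $V(t,x) \ge V(x)$), so the Schrödinger operators in Problems~2 and~3 have compact resolvent and discrete spectrum, and the min-max principle applies without the essential-spectrum caveat.

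For part (ii), the limit $\lim_{t\to 0+}\hat\mu_k^t = \bar\mu_k$, the inequality $\bar\mu_k \le \hat\mu_k^t$ from part (i) gives the lower bound on the limit inferior, so it remains to show $\limsup_{t\to 0+}\hat\mu_k^t \le \bar\mu_k$. Here I would fix the $k$-dimensional spectral subspace $\bar S = \mathrm{span}\{\bar u_1,\dots,\bar u_k\}$ spanned by the first $k$ eigenfunctions of Problem~3; since $V(x)$ is linear in $|x|$, these eigenfunctions decay super-exponentially (Airy-type decay), so given $\varepsilon > 0$ one can truncate and mollify them to functions $\bar u_j^{(R)}$ supported in $[-R,R]$ with $\|\bar u_j - \bar u_j^{(R)}\|_{H^1} < \varepsilon$ and with the potential integral $\int V(x)|\bar u_j^{(R)}|^2\,dx$ close to $\int V(x)|\bar u_j|^2\,dx$. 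For all $t$ small enough that $[-R,R] \subset I^t$, the truncated space $\bar S^{(R)} \subset H^1_0(I^t)$ is $k$-dimensional, and on $[-R,R]$ we have $V(t,x) \to V(x)$ uniformly by Lemma~\ref{lem:monotonicity}(ii) (uniform convergence on the compact set $[-R,R]$, which follows from the explicit form of $V(t,x)$ and the fact that the convergence is monotone — Dini's theorem — or directly from the formula). Hence $R_t[u] \to R[u]$ uniformly on the unit sphere of $\bar S^{(R)}$, so $\hat\mu_k^t \le \sup_{u \in \bar S^{(R)}\setminus\{0\}} R_t[u] \le \sup_{u\in\bar S^{(R)}\setminus\{0\}} R[u] + o(1) \le \bar\mu_k + C\varepsilon + o(1)$ as $t \to 0+$. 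Letting $t \to 0+$ then $\varepsilon \to 0$ completes the argument.

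The main obstacle I anticipate is the truncation step in part (ii): one must verify that the eigenfunctions of Problem~3 decay fast enough that cutting them off to a compact interval costs only $\varepsilon$ in $H^1$-norm \emph{and} in the weighted $L^2$ norm $\int V(x)|u|^2\,dx$ (the weight itself grows, so this is not automatic from $H^1$-smallness alone). This is where the explicit Airy-function description of Problem~3's eigenfunctions—referenced in the excerpt via Lemma~\ref{thm:airy-eigen}—is essential: Airy functions decay like $\exp(-c|x|^{3/2})$, which dominates any polynomial weight, so the truncation error in the weighted norm is controlled. A secondary, more routine point is checking that the zero-extension maps used in part~(i) really are isometric on the kinetic term and do not introduce boundary terms, which is immediate since $H^1_0$ functions on $I^t$ extend to $H^1$ functions on the larger domain with the same weak derivative.
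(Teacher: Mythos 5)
Your proof is correct, and part (i) follows essentially the same route as the paper: zero extension between the nested intervals $I^{t_2}\subset I^{t_1}\subset\mathbb{R}$, the pointwise comparison $V(x)\le V(t_1,x)\le V(t_2,x)$ from Lemma \ref{lem:monotonicity}, and the min-max principle. For part (ii) the paper offers only the one-line remark that it follows from continuity of $V(t,x)$ in $t$, whereas you supply the full argument: the lower bound comes from (i), and the upper bound from testing with truncations of the Problem 3 eigenfunctions on $[-R,R]\subset I^t$, using uniform (Dini/explicit-formula) convergence of $V(t,\cdot)$ to $V$ on compacts; your attention to the weighted-norm truncation error, justified by the Airy-type decay $\exp(-c|x|^{3/2})$ of the eigenfunctions (explicit in Lemma \ref{lem:eigen-implicit}), is exactly the point that makes this sketch rigorous, so your write-up is in fact more complete than the paper's on this item.
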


\begin{proof}
The proof of (i) is based on the monotonicity of the potential \( V(t, x) \) with respect to \( t \).

First, note that \( I^{t_2} \subset I^{t_1} \) for \( t_1 \leq t_2 \). Let \( \tilde{u} \in H^1_0(I^{t_1}) \) be the zero extension of \( u\in H^1_0(I^{t_2}) \) defined by
\[
\tilde{u}(x) =
\begin{cases}
u(x), & x \in I^{t_2}, \\
0, & x \in I^{t_1} \setminus I^{t_2}.
\end{cases}
\]
Then, from Lemma \ref{lem:monotonicity}, we have
\[
|\tilde u'(x)|^2 + V(t_1, x) |\tilde u(x)|^2 \leq |u'(x)|^2 + V(t_2, x) | u(x)|^2, \quad \forall u \in H^1_0(I^{t_2}).
\]
Therefore, for any \( u \in H^1_0(I^{t_2}) \), 
\[
R_{t_1}[\tilde u]\leq R_{t_2}[u].
\]
By the min-max principle, we deduce that
$\hat{\mu}_k^{t_1} 
\leq
\hat{\mu}_k^{t_2}$.

Similarly, the inequality $\bar{\mu}_k\leq \hat{\mu}_k^{t_1}$ can be shown by using the min-max principle and the property of the zero extension of \( u\in H^1_0(I^{t_1}) \):
\begin{equation}\label{eq:zero-extention}
\tilde{u}(x) =
\begin{cases}
u(x), & x \in I^{t_1} \\
0, & x \in \mathbb{R} \setminus I^{t_1}
\end{cases},
\quad R[\tilde{u}] \le R_{t_1}[u].
\end{equation}

Property (ii) can be shown using the continuity of $V(t,x)$ with respect to $t$.

\end{proof}

\subsection{Lower bound for \( \mu^t_k \)}
In \cite{liu2015framework}, Liu introduced a method for evaluating lower bounds of Laplacian eigenvalues by employing a projection error estimate for a nonconforming finite element space. Here, we apply this method to the infinite-dimensional subspace $W(T^t)$ of $H^1_0(T^t)$ to derive a lower bound for  $\mu^t_k$ of Problem 1.

Let $\tilde a(\cdot,\cdot)$ and $\tilde b(\cdot,\cdot)$ be the bilinear forms on  $H^1_0(T^t)$ defined by
\begin{equation}
    \tilde a(u,v):=t^{\frac{4}{3}} \left[ (\nabla u, \nabla v)_{T^t} - \dfrac{\pi^2}{t^2} (u, v)_{T^t} \right],~~\tilde b(u,v):=(u,v)_{T^t}.
\end{equation}
Note that $\tilde a(\cdot,\cdot)$  forms an inner product on $H^1_0(T^t)$ due to the known fact $\lambda^t_k>\pi^2/t^2$; see \cite[Prop. 1.1]{ourmieres2015dirichlet}. 
Define the norms $\|\cdot\|_{\tilde a}$ and $\|\cdot\|_{\tilde b}$ over $H^1_0(T^t)$ by
\begin{equation}
    \|\cdot\|_{\tilde a}:=\sqrt{\tilde a(\cdot,\cdot)},~~
    \|\cdot\|_{\tilde b}:=\sqrt{\tilde b(\cdot,\cdot)}.
\end{equation}
Let \( \widetilde{P} \) be the projection operator from \( H^1_0(T^t) \) onto \( W(T^t) \) with respect to the inner product $\tilde a(\cdot,\cdot)$. 

First, we establish an estimate for the projection \( \widetilde{P} \).

\begin{lemma}\label{lem:projection-error-estimate}
For any $u\in H^1_0(T^t)$, the following estimate holds:
\begin{equation}
    \|u -\widetilde{P} u\|_{\tilde b} \leq \dfrac{ t^{ \frac{1}{3} } }{ \sqrt{3} \pi } \|u -\widetilde{P} u\|_{\tilde a}.
\end{equation}
\end{lemma}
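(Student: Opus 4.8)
The plan is to reduce the two-dimensional projection estimate to the one-dimensional Poincaré-type inequality that underlies the relation between Problems 1 and 2, exploiting the tensor-product structure hidden in the definition of $W(T^t)$. Concretely, fix $u \in H^1_0(T^t)$ and let $w := u - \widetilde{P}u$. By the defining property of the $\tilde a$-orthogonal projection, $w$ is $\tilde a$-orthogonal to $W(T^t)$; in particular, for every $v \in W(T^t)$ we have $t^{4/3}[(\nabla w, \nabla v)_{T^t} - \tfrac{\pi^2}{t^2}(w,v)_{T^t}] = 0$. I want to bound $\|w\|_{\tilde b}^2 = \|w\|_{T^t}^2$ by $\tfrac{t^{2/3}}{3\pi^2}\|w\|_{\tilde a}^2$.

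The key step is a fiberwise decomposition. For a.e.\ $x \in I$, write the slice $w(x,\cdot)$ on the vertical segment $(0, h(x))$ and expand it in the Dirichlet eigenbasis $\{\sqrt{2/h(x)}\sin(n\pi y/h(x))\}_{n\ge 1}$ of $-\partial_y^2$ on that interval, whose eigenvalues are $n^2\pi^2/h(x)^2$. The space $W(T^t)$ consists exactly of functions whose fiberwise expansion has only the $n=1$ component (with the prescribed $x$-profile), so the $\tilde a$-orthogonality of $w$ to $W(T^t)$ forces — after integrating by parts in $y$ — that the $n=1$ Fourier coefficient of $w$ satisfies a one-dimensional variational identity; combined with the boundary conditions this should actually kill the $n=1$ component of $w$ in a suitable weighted sense, or at least let us treat it together with the higher modes. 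For the higher modes $n\ge 2$, on each fiber $\int_0^{h(x)} |\partial_y w|^2 \ge \tfrac{4\pi^2}{h(x)^2}\int_0^{h(x)} |w|^2$ by the second Dirichlet eigenvalue of the interval, so that
\[
\|\nabla w\|_{T^t}^2 \;\ge\; \int_I \int_0^{h(x)} |\partial_y w|^2\, dy\, dx \;\ge\; \int_I \frac{4\pi^2}{h(x)^2}\int_0^{h(x)} |w|^2\, dy\, dx.
\]
Since $h(x) \le t$ on $T^t$ (the height is $t$), this gives $\|\nabla w\|_{T^t}^2 \ge \tfrac{4\pi^2}{t^2}\|w\|_{T^t}^2$, hence $\|\nabla w\|_{T^t}^2 - \tfrac{\pi^2}{t^2}\|w\|_{T^t}^2 \ge \tfrac{3\pi^2}{t^2}\|w\|_{T^t}^2$, i.e. $\|w\|_{\tilde a}^2 \ge t^{4/3}\cdot\tfrac{3\pi^2}{t^2}\|w\|_{\tilde b}^2 = \tfrac{3\pi^2}{t^{2/3}}\|w\|_{\tilde b}^2$, which rearranges to the claimed inequality $\|w\|_{\tilde b} \le \tfrac{t^{1/3}}{\sqrt 3\,\pi}\|w\|_{\tilde a}$.

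The main obstacle is handling the $n=1$ fiber component of $w$ cleanly: unlike a genuine rectangular product domain, the varying height $h(x)$ means $W(T^t)$ is not simply ``all functions with trivial $n\ge 2$ part,'' so one must argue that the $\tilde a$-orthogonality of $w = u - \widetilde{P}u$ to $W(T^t)$ — which contains every function of the form $v(x)\sqrt{2/h(x)}\sin(\pi y/h(x))$ with $v \in H^1_0(I)$ — is strong enough that the estimate above still closes. The cleanest route is: project $w$ itself fiberwise onto the $n=1$ mode to get $w = w_1 + w_\perp$ where $w_1 \in W(T^t)$ (after checking the $x$-regularity needed for membership) and $w_\perp$ has only $n\ge 2$ modes; then $\tilde a(w, w_1) = 0$ gives $\|w_1\|_{\tilde a}^2 = -\tilde a(w_\perp, w_1)$, and one shows $\tilde a(w_\perp, w_1)$ in fact vanishes or has a favorable sign because the cross terms $(\nabla w_\perp, \nabla w_1)_{T^t}$ and $(w_\perp, w_1)_{T^t}$ decouple over the fiber orthogonality of distinct sine modes — the only surviving cross contribution comes from $\partial_x$ hitting the $x$-dependent normalization $\sqrt{2/h(x)}\sin(\pi y/h(x))$, which must be estimated or shown to integrate away. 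Once $w_1$ is controlled, the bound on $w_\perp$ proceeds by the second-eigenvalue argument above, and $\|w\|_{\tilde b}^2 = \|w_1\|_{\tilde b}^2 + \|w_\perp\|_{\tilde b}^2$ is bounded accordingly. I expect the paper to carry out exactly this fiberwise Fourier argument, with the gap $4\pi^2/h^2 - \pi^2/t^2 \ge 3\pi^2/t^2$ being the arithmetic heart of the constant $1/(\sqrt 3\,\pi)$.
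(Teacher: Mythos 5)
Your core estimate is exactly the paper's proof: expand the error fiberwise in the basis $\phi_n(y;x)=\sqrt{2/h(x)}\,\sin\bigl(n\pi y/h(x)\bigr)$, keep only the $\partial_y$ part of the Dirichlet form, and use the spectral gap $(n\pi/h(x))^2-\pi^2/t^2\ge 4\pi^2/t^2-\pi^2/t^2=3\pi^2/t^2$ for $n\ge 2$ together with $h\le t$; the paper additionally writes out the $\partial_x$ contributions ($|\hat u_n'|^2$ and the $h'^2/h^2$ terms) and discards them as nonnegative, which is what your sketch does implicitly. So the ``arithmetic heart'' you predicted is indeed the paper's argument.

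The gap is the point you yourself flag and then leave open: why the error $u-\widetilde P u$ of the \emph{$\tilde a$-orthogonal} projection has no $n=1$ fiber component, or how to control that component otherwise. Your hope that the cross terms ``vanish or have a favorable sign'' does not hold: the fiberwise truncation $\Pi u:=\hat u_1(x)\phi_1(y;x)$ is the $\tilde b$-orthogonal ($L^2(T^t)$-orthogonal) projection onto $W(T^t)$, not the $\tilde a$-orthogonal one, because $\partial_x$ hits the $x$-dependence of the modes. Concretely, for $w_1=v(x)\phi_1$ and $z=v(x)\phi_2$ with $v\in H^1_0(I)$, the $L^2$ and $\partial_y$ cross terms vanish, but $\int_0^{h(x)}\partial_x\phi_2\,\partial_x\phi_1\,dy$ equals a nonzero constant times $h'(x)^2/h(x)^2$, so $\tilde a(z,w_1)$ is a nonzero multiple of $\int_I (h'/h)^2 v^2\,dx\neq 0$. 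Hence $\widetilde P u\neq \Pi u$ in general; your identity $\|w_1\|_{\tilde a}^2=-\tilde a(w_\perp,w_1)$ only yields $\|w_1\|_{\tilde a}\le\|w_\perp\|_{\tilde a}$, which is not enough to recover the stated constant $t^{1/3}/(\sqrt3\pi)$ without additionally quantifying the $\tilde a$-angle between $W(T^t)$ and the higher-mode space (and any such bound would degrade the constant). You should also know that the paper does not supply this missing step either: its proof simply asserts that ``the projection $\widetilde P u$ retains only the $n=1$ term,'' i.e.\ it identifies $\widetilde P$ with $\Pi$ without justification, which is precisely the step you questioned, and it matters because the subsequent lemma uses $\tilde a$-Pythagoras and $\tilde a$-orthogonality of $\widetilde P u$ to the first $k-1$ eigenfunctions. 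So your instinct about where the difficulty sits is correct, but neither your sketch nor the paper closes it with the claimed constant; a clean repair would be to prove the estimate for the truncation $\Pi$ (where your $n\ge2$ argument is valid) and rework the lower-bound lemma so that only this $\tilde b$-orthogonal projection is needed, or else to estimate the coupling terms explicitly.
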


\begin{proof}
For each fixed \( x \in [-1, 1] \), consider the orthonormal basis \( \{\phi_n(y; x)\}_{n\geq1} \) of \( L^2((0, h(x))) \) defined by
\[
\phi_n(y; x) = \sqrt{\dfrac{2}{h(x)}} \sin\left( \dfrac{n\pi y}{h(x)} \right).
\]
Then \( u(x, y) \) admits the Fourier series expansion
\[
u(x, y) = \sum_{n=1}^\infty \hat{u}_n(x) \phi_n(y; x),
\]
where
\[
\hat{u}_n(x) = \int_0^{h(x)} u(x, y) \phi_n(y; x) \, dy.
\]
Since \( W(T^t) \) consists of functions involving only \( \phi_1(y; x) \), the projection \( \widetilde{P} u \) retains only the \( n = 1 \) term:
\[
\widetilde{P} u(x, y) = \hat{u}_1(x) \phi_1(y; x).
\]
Therefore, the difference \( u - \widetilde{P} u \) is given by
\[
u - \widetilde{P} u = \sum_{n=2}^\infty \hat{u}_n(x) \phi_n(y; x).
\]
Using the orthonormality of \( \{\phi_n(y; x)\} \) in \( L^2(0, h(x)) \), we have
\begin{align*}
\|u - \widetilde{P} u\|_{\tilde{b}}^2 &= \int_{T^t} |u - \widetilde{P} u|^2 \, dx \, dy = \int_{-1}^1 \int_0^{h(x)} \left| \sum_{n=2}^\infty \hat{u}_n(x) \phi_n(y; x) \right|^2 dy \, dx \\
&= \int_{-1}^1 \sum_{n=2}^\infty |\hat{u}_n(x)|^2 dx.
\end{align*}
Next, we compute \( \tilde{a}(u - \widetilde{P} u, u - \widetilde{P} u) \). The bilinear form \( \tilde{a} \) satisfies
\[
\tilde{a}(u - \widetilde{P} u, u - \widetilde{P} u) = t^{4/3} \left( \|\nabla (u - \widetilde{P} u)\|_{L^2(T^t)}^2 - \dfrac{\pi^2}{t^2} \|u - \widetilde{P} u\|_{L^2(T^t)}^2 \right).
\]
Let us compute \( \|\nabla (u - \widetilde{P} u)\|_{L^2(T^t)}^2 \) by calculating the partial derivatives with respect to \( x \) and \( y \).

For the derivative with respect to \( y \), we have
\[
\dfrac{\partial \phi_n}{\partial y} = \dfrac{n\pi}{h(x)} \sqrt{\dfrac{2}{h(x)}} \cos\left( \dfrac{n\pi y}{h(x)} \right).
\]
Therefore, it follows that
\begin{equation}\label{eq:derivative-y}
\left\| \dfrac{\partial (u - \widetilde{P} u)}{\partial y} \right\|_{L^2(T^t)}^2 = \int_{-1}^1 \sum_{n=2}^\infty |\hat{u}_n(x)|^2 \left( \dfrac{n\pi}{h(x)} \right)^2 dx.
\end{equation}
For the derivative with respect to \( x \), we compute
\begin{equation}
\dfrac{\partial \phi_n}{\partial x} = -\dfrac{h'(x)}{2h(x)} \phi_n(y; x) - \dfrac{n\pi y h'(x)}{h(x)^2} \sqrt{\dfrac{2}{h(x)}} \cos\left( \dfrac{n\pi y}{h(x)} \right).
\end{equation}
Using orthogonality and standard integral identities, we find that
\[
\left\| \dfrac{\partial \phi_n}{\partial x} \right\|_{L^2(0, h(x))}^2 = \dfrac{h'(x)^2}{h(x)^2} \left( \dfrac{4 n^2 \pi^2 + 3}{6} \right).
\]
Therefore,
\begin{equation}\label{eq:derivative-x}
\left\| \dfrac{\partial (u - \widetilde{P} u)}{\partial x} \right\|_{L^2(T^t)}^2 = \int_{-1}^1 \sum_{n=2}^\infty \left( |\hat{u}_n'(x)|^2 + |\hat{u}_n(x)|^2 \left\| \dfrac{\partial \phi_n}{\partial x} \right\|_{L^2(0, h(x))}^2 \right) dx.
\end{equation}
Combining the contributions from \eqref{eq:derivative-y} and \eqref{eq:derivative-x}, we obtain
\begin{align*}
\|\nabla (u - \widetilde{P} u)\|_{L^2(T^t)}^2 &= \int_{-1}^1 \sum_{n=2}^\infty \left\{ |\hat{u}_n'(x)|^2 + |\hat{u}_n(x)|^2 \left[ \dfrac{h'(x)^2}{h(x)^2} \dfrac{4 n^2 \pi^2 + 3}{6} + \left( \dfrac{n\pi}{h(x)} \right)^2 \right] \right\} dx.
\end{align*}
Thus, we can write
\[
\|u - \widetilde{P} u\|_{\tilde{a}}^2 = t^{4/3} \int_{-1}^1 \sum_{n=2}^\infty \left\{ |\hat{u}_n'(x)|^2 + |\hat{u}_n(x)|^2 \left[ \dfrac{h'(x)^2}{h(x)^2} \dfrac{4 n^2 \pi^2 + 3}{6} + \left( \dfrac{n\pi}{h(x)} \right)^2 - \dfrac{\pi^2}{t^2} \right] \right\} dx.
\]
Using the fact that \( h(x) \leq t \) and \( h'(x) \) is bounded, we can estimate the coefficient of \( |\hat{u}_n(x)|^2 \) for \( n \geq 2 \) as
\[
\left( \dfrac{n\pi}{h(x)} \right)^2 - \dfrac{\pi^2}{t^2} \geq \dfrac{4\pi^2}{t^2} - \dfrac{\pi^2}{t^2} = \dfrac{3\pi^2}{t^2}.
\]
Therefore,
\[
\|u - \widetilde{P} u\|_{\tilde{a}}^2 \geq t^{4/3} \dfrac{3\pi^2}{t^2} \int_{-1}^1 \sum_{n=2}^\infty |\hat{u}_n(x)|^2 dx = \dfrac{3\pi^2}{t^{2/3}} \|u - \widetilde{P} u\|_{\tilde{b}}^2.
\]
Rearranging this inequality yields
\[
\|u - \widetilde{P} u\|_{\tilde{b}}^2 \leq \dfrac{t^{2/3}}{3\pi^2} \|u - \widetilde{P} u\|_{\tilde{a}}^2,
\]
and taking square roots gives
\[
\|u - \widetilde{P} u\|_{\tilde{b}} \leq \dfrac{t^{1/3}}{\sqrt{3}\pi} \|u - \widetilde{P} u\|_{\tilde{a}}.
\]
This completes the proof.
\end{proof}
Using the newly obtained projection error estimate in Lemma \ref{lem:projection-error-estimate} and following the proof of \cite[Thm. 2.1.]{liu2015framework} almost verbatim, we obtain the following lower bound:
\begin{lemma}\label{thm:ch-estimation}
For each \( k \geq 1 \), the eigenvalues \( \mu_k^t \) of Problem 1 and \( \hat{\mu}_k^t \) of Problem 2 satisfy
\begin{equation}\label{eq:eigenvalue_estimate}
\mu_k^t \geq \dfrac{ \hat{\mu}_k^t }{ 1 + t^\frac{2}{3}/(3\pi^2) \hat{\mu}_k^t }.
\end{equation}
\end{lemma}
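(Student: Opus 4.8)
The plan is to follow the abstract lower-bound argument of \cite[Thm.~2.1]{liu2015framework}, whose only domain-specific input is the projection estimate of Lemma \ref{lem:projection-error-estimate}; it supplies the constant $C_t:=t^{1/3}/(\sqrt{3}\,\pi)$, so $C_t^2=t^{2/3}/(3\pi^2)$. Throughout I use that $\tilde a(\cdot,\cdot)$ is an inner product on $H^1_0(T^t)$, that $\mu_k^t$ is the $k$-th eigenvalue of $\tilde a(u,v)=\mu\,\tilde b(u,v)$ on $H^1_0(T^t)$, and that, by Lemma \ref{lem:problem1-2}, $\hat\mu_k^t$ is the $k$-th eigenvalue of this same generalized eigenvalue problem restricted to $W(T^t)$; both families are positive and both satisfy the Courant--Fischer min--max characterization with Rayleigh quotient $R(v):=\tilde a(v,v)/\tilde b(v,v)$ (and $W(T^t)\subset H^1_0(T^t)$ already forces $\mu_k^t\le\hat\mu_k^t$).

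First I would dispose of the case $\mu_k^t\ge 1/C_t^2$: since $x\mapsto x/(1+C_t^2x)$ is increasing with supremum $1/C_t^2$, we get $\hat\mu_k^t/(1+C_t^2\hat\mu_k^t)<1/C_t^2\le\mu_k^t$, which is already \eqref{eq:eigenvalue_estimate}. So assume $\mu_k^t<1/C_t^2$. Let $E_k$ be the span of the first $k$ eigenfunctions of Problem 1, so $\|u\|_{\tilde a}^2\le\mu_k^t\|u\|_{\tilde b}^2$ on $E_k$. Then $\widetilde P$ is injective on $E_k$: if $\widetilde P u=0$ with $0\ne u\in E_k$, Lemma \ref{lem:projection-error-estimate} gives $\|u\|_{\tilde b}\le C_t\|u\|_{\tilde a}$, hence $R(u)\ge 1/C_t^2>\mu_k^t$, a contradiction. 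So $\widetilde P E_k$ is a $k$-dimensional trial subspace of $W(T^t)$, and min--max for $\hat\mu_k^t$ over $W(T^t)$ gives $\hat\mu_k^t\le\max_{0\ne u\in E_k}R(\widetilde P u)$.

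It remains to estimate $R(\widetilde P u)$ for $u\in E_k$ with $\|u\|_{\tilde b}=1$. Put $e:=u-\widetilde P u$, $q:=\|u\|_{\tilde a}$, $p:=\|e\|_{\tilde a}$. The $\tilde a$-orthogonality of the projection gives $\|\widetilde P u\|_{\tilde a}^2=q^2-p^2$ (so $p\le q$), and the triangle inequality in $\|\cdot\|_{\tilde b}$ together with Lemma \ref{lem:projection-error-estimate} gives $\|\widetilde P u\|_{\tilde b}\ge 1-\|e\|_{\tilde b}\ge 1-C_tp>0$ (positive because $p\le q\le\sqrt{\mu_k^t}<1/C_t$); hence $R(\widetilde P u)\le(q^2-p^2)/(1-C_tp)^2$. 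The step I expect to be the main obstacle is the elementary but non-obvious inequality $(q^2-p^2)/(1-C_tp)^2\le q^2/(1-C_t^2q^2)$ for $0\le p\le q$, $C_tq<1$ --- clearing the positive denominators reduces it to $(p-C_tq^2)^2\ge 0$. Combining this with the monotonicity of $x\mapsto x/(1-C_t^2x)$ on $[0,1/C_t^2)$ and $q^2\le\mu_k^t$ yields $R(\widetilde P u)\le\mu_k^t/(1-C_t^2\mu_k^t)$ uniformly in $u$, hence $\hat\mu_k^t\le\mu_k^t/(1-C_t^2\mu_k^t)$. Dividing through by $\mu_k^t\hat\mu_k^t>0$ gives $1/\mu_k^t\le 1/\hat\mu_k^t+C_t^2$, i.e.\ $\mu_k^t\ge\hat\mu_k^t/\bigl(1+(t^{2/3}/(3\pi^2))\hat\mu_k^t\bigr)$, which is \eqref{eq:eigenvalue_estimate}. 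Aside from the algebraic identity, the only care needed is the joint handling of the thresholds $\mu_k^t<1/C_t^2$ and $\dim\widetilde P E_k=k$, done above.
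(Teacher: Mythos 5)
Your argument is correct, but despite your stated intention to follow \cite[Thm.~2.1]{liu2015framework}, it actually runs in the opposite direction from the paper's proof. The paper takes the span $E^h_{k-1}$ of the first $k-1$ eigenfunctions of Problem 2 inside $W(T^t)$, and for $u$ in its $\tilde a$-orthogonal complement in $H^1_0(T^t)$ combines the bound $\|\widetilde P u\|_{\tilde a}\ge\sqrt{\hat\mu_k^t}\,\|\widetilde P u\|_{\tilde b}$ with the triangle inequality, Lemma \ref{lem:projection-error-estimate} and Cauchy--Schwarz, and then invokes the max--min principle for $\mu_k^t$; no case distinction or dimension count is needed. You instead take the span $E_k$ of the first $k$ exact eigenfunctions of Problem 1, prove that $\widetilde P$ is injective on $E_k$ (which forces your preliminary split at $\mu_k^t=3\pi^2t^{-2/3}$, the large case being disposed of by the boundedness of $x\mapsto x/(1+C_t^2x)$), use $\widetilde P E_k$ as a $k$-dimensional trial space in the min--max characterization of $\hat\mu_k^t$ on $W(T^t)$ (legitimate via Lemma \ref{lem:problem1-2}), and control $R(\widetilde P u)$ through the inequality $(q^2-p^2)/(1-C_tp)^2\le q^2/(1-C_t^2q^2)$, equivalent to $(p-C_tq^2)^2\ge 0$; I checked this identity, the positivity of $1-C_tp$, and the monotonicity and reciprocal steps, and they all hold, yielding exactly \eqref{eq:eigenvalue_estimate}. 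Both proofs rest solely on the projection estimate of Lemma \ref{lem:projection-error-estimate} and give the same bound; the paper's route is shorter and unconditional, while yours is the classical ``project the exact eigenfunctions into the approximating subspace'' argument, which costs the extra injectivity and threshold bookkeeping but you handle it correctly.
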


\begin{proof}

From Lemma \ref{lem:projection-error-estimate}, we have
\begin{equation}\label{eq:projection-error-estimate}
    \| u - \widetilde{P} u \|_{\tilde b} \leq C \| u - \widetilde{P} u \|_{\tilde a}~~\text{for all \( u \in H^1_0(T^t) \)},
\end{equation}
where $C:=\dfrac{t^{1/3}}{\sqrt{3}\pi}$.
Let \(  \hat{u}_k  \in W(T^t)\) be an $L^2$-orthonormal eigenfunction corresponding to \( \hat{\mu}_k^t\).

Define \( E^h_{k-1} = \mathrm{span}\{ \hat{u}_1, \dots, \hat{u}_{k-1} \} \). Let \((E^h_{k-1})^\bot \) be the orthogonal complement of $E^h_{k-1}$ in $H^1_0(T^t)$ with respect to $\tilde a(\cdot,\cdot)$. Noting that
\begin{equation}
    \hat{\mu}_k^t=\inf_{u\in (E^h_{k-1})^\bot}\frac{\| u \|^2_{\tilde{a}}}{\| u \|^2_{\tilde{b}}},
\end{equation}
we have
\begin{equation}\label{eq:lower_bound_M_new}
\| \widetilde{P} u \|_{\tilde{a}} \geq \sqrt{\hat{\mu}_k^t} \| \widetilde{P} u \|_{\tilde{b}}~~\text{for any \( u \in  (E^h_{k-1})^\bot \)}.
\end{equation}
Decompose \( u \) as
\[
u = \widetilde{P} u + (u - \widetilde{P} u),
\]
with \( \widetilde{P} u \in W(T^t) \) and \( u - \widetilde{P} u \in (W(T^t))^\bot \), the \( \tilde{a} \)-orthogonal complement of \( W(T^t) \) in \( H^1_0(T^t) \). Then,
\begin{align}
\| u \|_{\tilde{b}} &\leq \| \widetilde{P} u \|_{\tilde{b}} + \| u - \widetilde{P} u \|_{\tilde{b}}. \label{eq:N_decomp_new}
\end{align}
Using \eqref{eq:lower_bound_M_new} and the projection error estimate \eqref{eq:projection-error-estimate}, we have
\begin{equation}\label{eq:error_Norm_new}
\| u \|_{\tilde{b}} \leq \frac{1}{\sqrt{\hat\mu_k^t}}\| \widetilde{P} u \|_{\tilde{a}} +  C \| u - \widetilde{P} u \|_{\tilde{a}},
\end{equation}
which leads to
\begin{align}
\| u \|_{\tilde{b}}^2 &\leq \left(\frac{1}{\hat\mu_k^t}+C^2\right)(\| \widetilde{P} u \|_{\tilde{a}}^2 +  \| u - \widetilde{P} u \|_{\tilde{a}}^2)\\
&=
\left(\frac{1}{\hat\mu_k^t}+C^2\right)\| u  \|_{\tilde{a}}^2.
\end{align}
Therefore, we have
\begin{equation}
    \frac{\|u\|_{\tilde{a}}^2}{\|u\|_{\tilde{b}}^2}\geq \frac{\hat\mu_k^t}{1+C^2\hat\mu_k^t}~~~\mbox{for any }u\in (E_{k-1}^h)^\bot.
\end{equation}
Thus, from the max-min principle, it follows that
\begin{equation}
    \mu_k^t\geq\frac{\hat\mu_k^t}{1+C^2\hat\mu_k^t}.
\end{equation}
\end{proof}

The following theorem summarizes the results obtained above:
\begin{theorem}\label{thm:eigenvalue_estimate-cor}
The eigenvalue \( \mu_k^t \) of Problem 1 satisfies
\begin{equation}\label{eq:main-estimation}
\frac{\bar{\mu}_k}{1+ t_0^\frac{2}{3}/(3\pi^2) \bar{\mu}_k}
\leq \mu_k^t \leq \hat\mu_k^{t_0} ~\mbox{ for any }t\in (0,t_0].
\end{equation}
\end{theorem}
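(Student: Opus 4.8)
\section*{Proof proposal}

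The plan is to assemble Theorem~\ref{thm:eigenvalue_estimate-cor} by chaining the three ingredients already established: the min--max inequality $\mu_k^t\le\hat\mu_k^t$, the monotonicity and comparison of $\hat\mu_k^t$ in Lemma~\ref{thm:monotonicity-t}, and the projection-based lower bound of Lemma~\ref{thm:ch-estimation}. No new analysis is required; the only elementary fact to isolate is the monotonicity of the one-variable map $g_c(x):=x/(1+cx)$ on $(0,\infty)$ for a fixed $c>0$, which follows from $g_c'(x)=1/(1+cx)^2>0$.

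For the upper bound, I would fix $t\in(0,t_0]$ and simply write $\mu_k^t\le\hat\mu_k^t\le\hat\mu_k^{t_0}$: the first inequality is the min--max consequence of $W(T^t)\subset H^1_0(T^t)$ combined with Lemma~\ref{lem:problem1-2}, and the second is the monotonicity of $t\mapsto\hat\mu_k^t$ from Lemma~\ref{thm:monotonicity-t}(i) applied with $t_1=t\le t_2=t_0$.

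For the lower bound, start from Lemma~\ref{thm:ch-estimation}, which gives
\[
\mu_k^t\ \ge\ \frac{\hat\mu_k^t}{1+\bigl(t^{2/3}/(3\pi^2)\bigr)\hat\mu_k^t}.
\]
Since $t\le t_0$ implies $t^{2/3}\le t_0^{2/3}$, enlarging the coefficient in the denominator can only decrease the right-hand side, so with $c:=t_0^{2/3}/(3\pi^2)>0$ we get $\mu_k^t\ge \hat\mu_k^t/\bigl(1+c\,\hat\mu_k^t\bigr)=g_c(\hat\mu_k^t)$. By Lemma~\ref{thm:monotonicity-t}(i) we have $\hat\mu_k^t\ge\bar\mu_k>0$, and since $g_c$ is increasing on $(0,\infty)$ this yields $g_c(\hat\mu_k^t)\ge g_c(\bar\mu_k)=\bar\mu_k/\bigl(1+c\,\bar\mu_k\bigr)$, which is exactly the claimed lower bound. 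Combining the two bounds finishes the proof.

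There is essentially no hard step here: the argument is a short monotonicity chase. The only point deserving a line of care is the double invocation of monotonicity in the lower bound — first in the parameter $c$ (replacing $t$ by $t_0$), then in the argument $x$ (replacing $\hat\mu_k^t$ by $\bar\mu_k$) — checking that each replacement pushes the bound in the correct direction. I would also record in passing that $\bar\mu_k>0$, so that $g_c$ is evaluated where it is monotone; this is inherited from the positivity of all $\hat\mu_k^t$ together with Lemma~\ref{thm:monotonicity-t}(ii), or directly from the fact that the potential $V$ in Problem~3 is nonnegative and not identically zero.
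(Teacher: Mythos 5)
Your proposal is correct and follows essentially the same route as the paper, whose proof is simply the statement that the theorem follows from Lemmas \ref{thm:monotonicity-t} and \ref{thm:ch-estimation}; you have merely filled in the implicit monotonicity chase (monotonicity of $x\mapsto x/(1+cx)$ and the replacement $t\to t_0$, $\hat\mu_k^t\to\bar\mu_k$) that the paper leaves to the reader. No gaps.
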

\begin{proof}
    This result follows directly from Lemmas \ref{thm:monotonicity-t} and \ref{thm:ch-estimation}.
\end{proof}

\begin{remark}
    From the property (i) of Lemma~\ref{thm:monotonicity-t} and Theorem~\ref{thm:ch-estimation}, we have 
    \begin{equation}
    \frac{\bar{\mu}_k}{1+ t^\frac{2}{3}/(3\pi^2) \bar{\mu}_k}
    \leq \mu_k^t \leq \hat\mu_k^{t} \quad (t>0).
    \end{equation}
    Since property (ii) of Lemma~\ref{thm:monotonicity-t} implies that $\lim_{t\to 0+}\hat\mu_k^{t}=\bar{\mu}_k$, sending $t\to 0+$ in the above inequality yields
    \begin{equation}
        \lim_{t\to 0+}\mu_k^{t}=\bar{\mu}_k.
    \end{equation}
    Therefore, we have the following asymptotic expansion of $\lambda^t_k$:
    \begin{equation}\label{eq:bonafos-mu-bar}
    \lambda^t_k\sim t^{-2} \left( \pi^2 + \bar{\mu}_k t^{2/3} \right)~~(t\to 0+).
    \end{equation}
    Comparing \eqref{eq:bonafos-mu-bar} with the asymptotic expansion in Lemma~\ref{thm:bonafos}, we obtain the following relation:
    \begin{equation}\label{eq:kappa-mu-bar-relation}
        (2\pi^2)^{2/3}\kappa_k(s) =  \bar{\mu}_k(s).
    \end{equation}
\end{remark}

\medskip

In the next section, using a method to obtain explicit bounds of $\bar{\mu}_k$ and $\hat{\mu}_k^t$, we will provide a computer-assisted proof for the degenerate case of Conjecture \ref{len:main-conjecture}.

\section{Computer-Assisted Proof for the Degenerate Case of Conjecture \ref{len:main-conjecture}}\label{sec:computer-assister-proof}

In this section, we present a computer-assisted proof of Theorem \ref{thm:degenerate-case-for-the-main-conjecture}.

Consider a triangular domain \( T(s,t) \) with vertices at \((-1,0)\), \((1,0)\), and \((s,t)\). Since the simplicity of eigenvalues is invariant under isometry and scaling, without loss of generality, we assume that the vertex \((s,t)\) lies within the set defined by
\begin{align}
    \Omega := \left\{ (s,t) \in \mathbb{R}^2 \;\middle|\; (s+1)^2 + t^2 \leq 4,\; t > 0,\; s \geq 0 \right\}.
\end{align}
We define the following subsets of \( \Omega \):
\begin{align}
    \Omega_{\text{down}} &:= \left\{ (s,t) \in \Omega \;\middle|\; 0 \leq s < 1,\; 0 < t \leq \tan\left(\frac{\pi}{60}\right) \right\}.
\end{align}
Figure~\ref{fig:moduli-space-of-triangle} illustrates the moduli spaces \( \Omega \) and \( \Omega_{\text{down}} \).
Note that every triangle determined by $(s,t)$ in $\Omega_{\text{down}}$ has its minimum normalized height $\le \tan\left({\pi}/{60}\right) /2$.
To prove Theorem \ref{thm:degenerate-case-for-the-main-conjecture}, it suffices to prove that \( \lambda_2(s,t) < \lambda_3(s,t) \) for all \( (s,t) \in \Omega_{\text{down}} \).

\begin{figure}[H]
    \centering
\begin{tikzpicture}[scale=2.5]
  \pgfmathsetmacro{\xD}{sqrt(4-0.04)-1}  
  
  %
  \fill[gray!20]
    (0,0) -- (1,0)                
    arc (0:5.739:2)               
    -- (0,0.2) -- cycle;          
  

  \draw (-1,0) -- (0,0);
  
  \draw[thick] (0,0) -- (1,0);
  
  \draw (-1,0) ++(2,0) arc (0:60:2);
  
  \draw[thick] (0,{sqrt(3)}) -- (0,0);
  
  \draw[thick, dotted] (-1,0) -- (0,{sqrt(3)});
  
  \draw[dotted] (0,0.2) -- (\xD,0.2);
  
  \node[below left] at (-1,0) {$A(-1,0)$};
  \node[below] at (1,0) {$B(1,0)$};
  \node[below] at (0,0) {$(0,0)$};
  \node[above] at (0,{sqrt(3)}) {$(0,\sqrt{3})$};

  \node at (0.5,0.1) {$\Omega_{\text{down}}$};
  
  \node at (0.3,1) {$\Omega$};
\end{tikzpicture}
    \caption{Moduli space of triangles}
    \label{fig:moduli-space-of-triangle}
\end{figure}
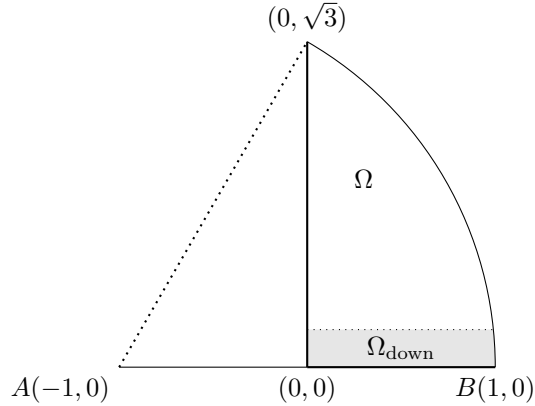

\medskip\medskip

Let $t_0=\tan(\pi/60)$. For the triangle \( T(s,t) \), let \( \mu_k^t(s) \), \( \hat{\mu}_k^t(s) \), and \( \bar{\mu}_k(s) \) denote the \( k \)-th eigenvalues of Problems 1, 2, and 3, respectively, in the same manner as in Section~\ref{sec:preliminary}.
From Theorem \ref{thm:eigenvalue_estimate-cor}, we have the following estimates for \( \mu_k^t(s) \) for each \( s \):

\begin{equation}\label{eq:estimation-for-mu-t-computation}
\underline{\mu}_k(s) := \frac{\bar{\mu}_k(s)}{1 + t_0^{2/3}/(3\pi^2) \cdot \bar{\mu}_k(s)} \leq \mu_k^t(s) \leq \hat{\mu}_k^{t_0}(s) \quad \text{for } t \in (0, t_0].
\end{equation}
The relations among the involved eigenvalues are summarized in Table \ref{tab:relations-eigenvalues}.

\renewcommand{\arraystretch}{1.5}
\begin{table}[h]
    \centering
    \caption{Relations among eigenvalues $\lambda_{k}(s,t)$, $\mu_k^t(s)$, $\hat{\mu}_k^t(s)$ and $\bar{\mu}_k(s)$
    \label{tab:relations-eigenvalues}}    \begin{tabular}{|c|c|c|}
        \hline
        & Eigenvalue & Relations\\
        \hline
   Objective     & $\lambda_k(s,t)$ & 
$\lambda_k(s,t) = t^{-4/3} (\mu_k^t(s)+\pi^2/t^2)$\\
 \hline
        Problem 1 & $\mu_k^t(s)$ & $ \underline{\mu}_k(s)\leq\mu_k^t(s)
\leq
\hat{\mu}_k^t(s) $
\\ \hline
        Problem 2 & $\hat{\mu}_k^t(s)$ &  $\hat{\mu}_k^t(s)\leq \hat{\mu}_k^{t_0}(s)$ for $t\leq t_0$\\ \hline
        Problem 3 & $\bar{\mu}_k(s)$ & $\hat{\mu}_k^t(s) \searrow \bar{\mu}_k(s) \quad \text{as } t\to 0+$ \\ \hline
    \end{tabular}

\end{table}

Note that the eigenvalues \(\bar{\mu}_k(s)\) and \(\kappa_k(s)\) are related by 
\begin{equation}
\label{eq:kappa-mu-bar-relation-comput}
        (2\pi^2)^{2/3}\kappa_k(s) =  \bar{\mu}_k(s).
\end{equation}
Regarding the quantity \(\kappa_k(s)\), Ourmi\`eres-Bonafos established the following result:
\begin{lemma}{[\cite{ourmieres2015dirichlet}, Proposition 2.3, 2.5]}
\label{thm:airy-eigen}
The $k$-th eigenvalue $\kappa_k(s)$ is the $k$-th positive solution to the following implicit equation in :
\begin{align}
\label{eq:airy-eigen}
\begin{split}
(f_s(\kappa):=)~&\sqrt[3]{1+s} \mathcal{A}\left((1+s)^{2/3} \kappa\right) \mathcal{A}'\left((1-s)^{2/3} \kappa\right)\\
&- \sqrt[3]{1-s} \mathcal{A}\left((1-s)^{2/3} \kappa\right) \mathcal{A}'\left((1+s)^{2/3} \kappa\right) = 0,
\end{split}
\end{align}
  where $\mathcal{A}$ denotes the Airy reversed function defined by $\mathcal{A}(u)=\text{Ai}(-u)$.

In this paper, we will obtain explicit bounds for $\kappa_3(s)$ by using verified computation methods; see Section \ref{sec:estimation-kappa}.
\end{lemma}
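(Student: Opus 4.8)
Since the Remark preceding the statement has already tied $\kappa_k(s)$ to Problem~3 through $(2\pi^2)^{2/3}\kappa_k(s)=\bar\mu_k(s)$, it is enough to characterize the eigenvalues $\bar\mu_k$ of Problem~3. The plan is to pass from the weak formulation \eqref{eq:problem4} to the strong-form equation
\[
-\bar u''(x)+V(x)\,\bar u(x)=\bar\mu\,\bar u(x)\qquad(x\in\mathbb{R}),
\]
with $V$ as in \eqref{eq:def-of-V}. This is legitimate because $V$ is nonnegative and locally bounded, so an eigenfunction lies in $H^2_{\mathrm{loc}}(\mathbb{R})\cap L^2(\mathbb{R})$ and in particular is $C^1$ across $x=0$; moreover, since $V\ge 0$ with $V(x)\to\infty$ as $|x|\to\infty$, the operator $-d^2/dx^2+V$ has compact resolvent, hence purely discrete spectrum, and every eigenvalue is strictly positive because no nonzero function in $H^1(\mathbb{R})$ has a.e.\ vanishing derivative.

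First I would solve the equation explicitly on each half-line, where $V$ is affine. On $\mathbb{R}_+$ one has $V(x)=\alpha_+ x$ with $\alpha_+=2\pi^2/(1-s)$, so the equation reads $\bar u''=(\alpha_+ x-\bar\mu)\bar u$; the affine substitution $\xi=\alpha_+^{1/3}(x-\bar\mu/\alpha_+)$ turns it into Airy's equation $w''(\xi)=\xi\,w(\xi)$, whose only solution (up to a scalar) that is $L^2$ near $+\infty$ is $\text{Ai}(\xi)$. Hence the solution decaying at $+\infty$ is $y_+(x)=\text{Ai}(\alpha_+^{1/3}(x-\bar\mu/\alpha_+))$, and symmetrically, with $\alpha_-=2\pi^2/(1+s)$, the solution decaying at $-\infty$ on $\mathbb{R}_-$ is $y_-(x)=\text{Ai}(-\alpha_-^{1/3}(x+\bar\mu/\alpha_-))$. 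Then $\bar\mu$ is an eigenvalue exactly when $y_-$ and $y_+$ glue into a global $C^1$ eigenfunction, i.e.\ when the interface Wronskian $y_-(0)\,y_+'(0)-y_-'(0)\,y_+(0)$ vanishes. Evaluating $y_\pm(0),y_\pm'(0)$, applying the substitution $\bar\mu=(2\pi^2)^{2/3}\kappa$ (so that $\alpha_+^{-2/3}\bar\mu=(1-s)^{2/3}\kappa$ and $\alpha_-^{-2/3}\bar\mu=(1+s)^{2/3}\kappa$), and rewriting everything via $\mathcal{A}(u)=\text{Ai}(-u)$, this vanishing condition reduces, up to a nonzero $s$-dependent prefactor, to the implicit equation $f_s(\kappa)=0$ of \eqref{eq:airy-eigen}; this last algebraic simplification is routine.

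It remains to match the indexing, i.e.\ to see that the $k$-th positive root of $f_s$ is $\kappa_k(s)$. Writing $W(\bar\mu)$ for the interface Wronskian expressed through the above substitution, $W$ is a real-analytic, not identically vanishing function whose real zeros coincide, as a set, with the eigenvalues: if $W(\bar\mu)=0$ then $(y_+(0),y_+'(0))$ and $(y_-(0),y_-'(0))$ are proportional---neither can be the zero vector, since $y_\pm$ is a nontrivial solution of a second-order linear ODE, and in the borderline case $y_+(0)=0$ the relation forces $y_-(0)=0$ as well, after which one checks the glued function is still nonzero because $\text{Ai}$ and $\text{Ai}'$ share no zero---so they produce a genuine $C^1\cap L^2$ eigenfunction; conversely, any eigenfunction restricts on each half-line to a nonzero $L^2$-solution, hence to a multiple of $y_\pm$, so $W=0$. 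Since the eigenvalues form a strictly increasing sequence $\bar\mu_1<\bar\mu_2<\cdots\to\infty$---each simple, either by the one-dimensionality of the $L^2$-near-$\pm\infty$ solution spaces just used or by citing Lemma~\ref{thm:bonafos}---and the positive zeros of $f_s$ are discrete and in bijection with these eigenvalues, the $k$-th positive zero of $f_s$ equals $(2\pi^2)^{-2/3}\bar\mu_k=\kappa_k(s)$, as claimed.

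I expect the delicate part to be exactly this last step: establishing rigorously that the positive zeros of $f_s$ and the eigenvalues correspond with the correct indexing---handling the degenerate interface configurations and the elementary oscillation count ensuring no root is skipped or spurious---whereas the reduction to the ODE and the Airy change of variables are standard.
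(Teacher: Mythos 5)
Your proposal is correct and follows essentially the same route as the paper's own argument (Appendix Lemma \ref{lem:eigen-implicit}): reduce to the ODE on each half-line, take the Airy-type solution that is square-integrable at $\pm\infty$, impose $C^1$ matching at $x=0$ as a vanishing $2\times 2$ determinant (Wronskian), identify this condition with $f_s(\kappa)=0$, and then match the indexing by noting the order-preserving bijection between the discrete eigenvalues and the positive roots. The only difference is cosmetic: you pass through Problem~3 and the relation $(2\pi^2)^{2/3}\kappa_k(s)=\bar\mu_k(s)$ instead of working directly with $l_s^{\mathrm{mod}}$, and you spell out the discreteness/simplicity and degenerate matching cases that the paper leaves implicit.
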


The statement regarding the indexing of the eigenvalues is clear, but it was not explicitly stated in \cite{ourmieres2015dirichlet}. Therefore, a detailed discussion on the equivalence between the eigenvalues of $l_s^\text{mod}$ and the solutions to \eqref{eq:airy-eigen} is given in Lemma \ref{lem:eigen-implicit}.

\medskip\medskip

\textbf{Steps of computer-assisted proof}

We will provide an upper bound of $\hat{\mu}_2^{t_0}(s)$ and a range of $\bar{\mu}_3(s)$ to show that
    \begin{equation}\label{eq:estimation-for-mu-t-computation-steps}
        \mu_2^t(s) \leq \hat{\mu}_2^{t_0}(s) < \frac{\bar{\mu}_3(s)}{1 + t_0^{2/3}/(3\pi^2) \cdot \bar{\mu}_3(s)} \leq \mu_3^t(s) \quad \text{for all } (s,t) \in \Omega_{\text{down}},
    \end{equation}
where $t_0=\tan(\pi/60)$.

Consequently, we ensure that 
    \begin{equation}\label{eq:airy-eigen-steps}
        \mu_2^t(s) <  \mu_3^t(s) \quad \text{for all } (s,t) \in \Omega_{\text{down}},
    \end{equation}
    i.e.,
    \begin{equation}\label{eq:eigen-steps}
        \lambda_2(s,t)  <  \lambda_3(s,t) \quad \text{for all } (s,t) \in \Omega_{\text{down}},
    \end{equation}

The proof proceeds according to the outline below. 
    \begin{description}
        \item[Step 1:] Obtain an upper bound for \( \hat{\mu}_2^{t_0}(s) \) using the Rayleigh--Ritz method.
        
        \item[Step 2:]  Determine the range of $\bar{\mu}_3(s)$  by utilizing the relation \(\bar{\mu}_3(s) = (2\pi^2)^{2/3} \kappa_3(s)\) and the value of \( \kappa_3(s) \) in \eqref{eq:airy-eigen}.
    \end{description}
Each step is performed for each \( s \in [0,1] \). Note that in practical computation, $s$ is taken as small intervals that subdivide $[0,1]$.

The behaviors of $\hat{\mu}_2^{t_0}(s)$, $\bar{\mu}_3(s)$ and $\underline{\mu}_3(s)$ are shown in Figure \ref{fig:mu-hat2-mu-bar3}.
\begin{figure}[H]
    \centering
\includegraphics[width=0.8\linewidth]{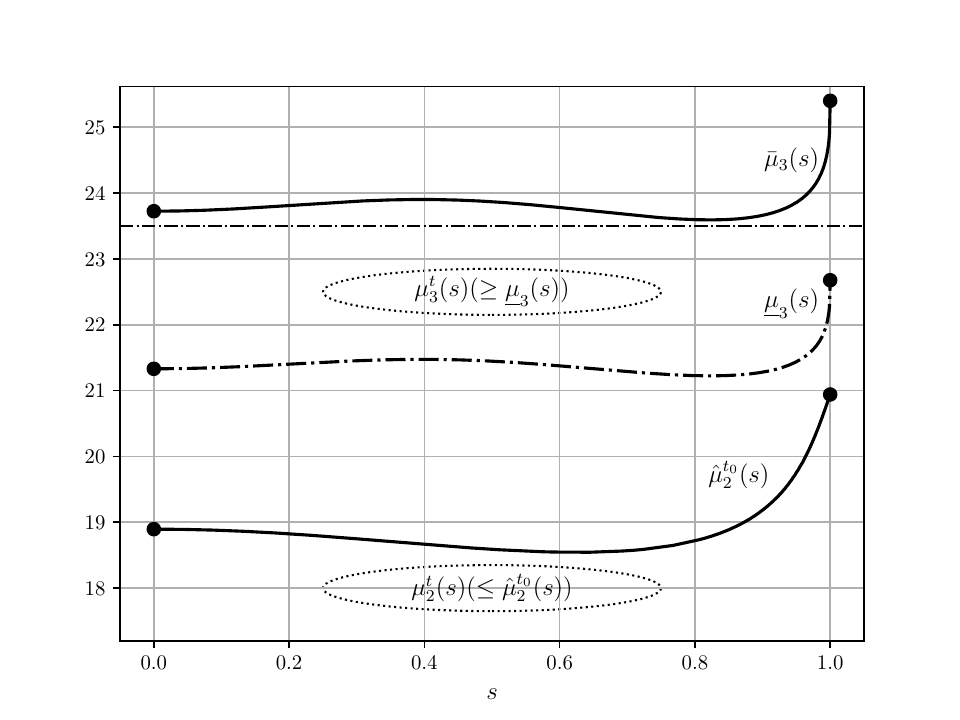}\vspace{-0.5cm}
\begin{picture}(0,0)
\put(-37,205){$\mu = 23.5$}
\end{picture}
    \caption{Graph of $\hat{\mu}_2^{t_0}(s)$, $\bar{\mu}_3(s)$ and $\underline{\mu}_3(s)$ }
    \label{fig:mu-hat2-mu-bar3}
\end{figure}

\medskip

\medskip

\subsection{Step 1: Upper bound for $\hat\mu_2^{t_0}(s)$}

The Rayleigh--Ritz method is utilized to obtain the upper bound for the eigenvalue $\hat\mu_2^{t_0}(s)$ over the one-dimensional interval $I^{t_0}$:
$$
I^{t_0}:=\left[a_{\text{left}},a_{\text{right}}\right]\left(= \left[{-t_0^{-2/3} (1+s)},{t_0^{-2/3} (1-s)}\right]\right).
$$

For $i=1,\cdots,N$, define the function $\varphi_i$ by
\begin{equation}
    \varphi_i(x)=(x-a_{\text{left}})(x-a_{\text{right}})\exp(-(x-c_i)^2)~(\in H^1_0(I^{t_0})),
\end{equation}
where
\begin{equation}\label{eq:def-of-phi-i}
    c_i = \frac{(N-i)a_{\text{left}} + (i+N-2)a_{\text{right}}}{2(N-1)}.
\end{equation}

The trial subspace of the Rayleigh--Ritz method is defined as follows:
\begin{equation}
\label{def:fem-space-cg-1d}
\widehat V(I^{t_0}) := \mbox{span}\left\{  \varphi_1,\varphi_2,\cdots,\varphi_N  \right\}(\subset H^1_0(I^{t_0})).
\end{equation}

To estimate the upper bound for the eigenvalue $\hat\mu_2^{t_0}(s)$ of \eqref{eq:problem2}, we consider the following eigenvalues defined over the trial space $\widehat{V}(I^{t_0})$:
\begin{equation}
    \hat{\mu}_{k,h}^{t_0}(s)=\min_{\widehat{V}^{(k)}\subset \widehat{V}(I^{t_0})}\max_{\hat{v}\in \widehat V^{(k)}}R_{t_0}[\hat{u}]~~(k=1,2,\cdots).
\end{equation}
Here, $\widehat{V}^{(k)}$ is $k$-dimensional subspace of $\widehat{V}(I^{t_0})$, and $R_{t_0}$ is the Rayleigh quotient defined in \eqref{eq:def-rt}.

Note that $\hat{\mu}_{2,h}^{t_0}(s)$ serves as an upper bound for $\hat{\mu}_{2,h}^{t_0}(s)$, that is,
\begin{equation*}
\hat{\mu}_2^{t_0}(s) \leq \hat{\mu}_{2,h}^{t_0}(s)~~\forall s\in[0,1].
\end{equation*}
To obtain a uniform upper bound of
\(\hat{\mu}_{2,h}^{t_0}(s)\) with respect to $s \in [0,1]$,
we partition \([0,1]\) into subintervals 
\[
  J_i 
  \;=\; \Bigl[\tfrac{i-1}{N_s}, \tfrac{i}{N_s}\Bigr], 
  \quad 
  i=1,2,\dots,N_s.
\]
Using rigorous interval arithmetic library INTLAB, one can compute an enclosure for \(\hat{\mu}_{2,h}^{t_0}(s)\) on each subinterval \(J_i\).  
The details of computation processes are summarized in Algorithm~\ref{algorithm-upper-bound}.

\begin{algorithm}[H]
\caption{\label{algorithm-upper-bound} Estimation of uniform upper bound of \(\hat{\mu}_2^{t_0}(s)\) for \(s \in [0,1]\).}
\KwIn{A discretization dimension \(N\); number of subintervals \(N_s\) for the variable \(s\in[0,1]\).}
\KwOut{A certified bound \(U\) such that \(\hat{\mu}_2^{t_0}(s)\le U\) for all \(s \in [0,1]\).}
Partition \([0,1]\) into \(N_s\) subintervals \(J_1,\dots,J_{N_s}\)\;

\For{\(i=1\) \textbf{to} \(N_s\)}{
    Construct stiffness and mass matrices of the Rayleigh--Ritz method as interval matrices over \(s \in J_i\)\\
    Solve the eigenvalue problem of the interval matrices to enclose \(\hat{\mu}_{2,h}^{t_0}(s)\) for \(s\in J_i\)\\
    Record the maximum upper bound \(U_i\) for \(\hat{\mu}_{2,h}^{t_0}(s)\) on \(J_i\)
}
\(U\gets \max\{U_1,\dots,U_{N_s}\}\)\;

\Return \(U\)\;
\end{algorithm}

In practical computations, we set the values of \( N \) and \( N_s \) as follows:
\begin{equation}
    N = 17, \quad N_s = 100.
\end{equation}

By combining the results from each subinterval \(J_i\), we obtain the following uniform bound with respect to $s$:
\[
  \hat{\mu}_2^{t_0}(s)\;\le\;\hat{\mu}_{2,h}^{t_0}(s)\;\le\;21.091
  \quad
  \forall\,s\in[0,1].
\]
Therefore, from the estimate \eqref{eq:estimation-for-mu-t-computation}, one can conclude that 
\begin{equation}
\label{eq:upper-bound-for-mu2}
  \mu_2^{t}(s) 
  \;\le\; 
  21.091
  \quad
  \forall\,t \in (0, t_0], \; s \in [0, 1].
\end{equation}

\medskip

\subsection{Step 2: Estimation for \(\overline{\mu}_3(s)\)}\label{sec:estimation-kappa}

Here, we aim to establish the following estimate:  
\begin{equation}\label{eq:mu3-bounds}  
    \overline{\mu}_3(s) \ge  23.5 \quad \text{for all } s \in [0,1).  
\end{equation}  
To verify this inequality, we first confirm that \(\overline{\mu}_3(0) \geq 23.5\) and then ensure that the branch of \(\overline{\mu}_3(s)\) in Figure \ref{fig:mu-hat2-mu-bar3} does not cross the line \(\mu=23.5\). The details of this discussion are as follows:

\begin{enumerate}
    \item \textbf{Verification of $\overline{\mu}_3(s)$ at \(s = 0\):} Using the function \texttt{verifynlssall} from the verified computation library INTLAB \cite{Rump1999}, we obtain a range rigorously contains the third positive solution to the equation \eqref{eq:airy-eigen} at $s=0$, that is,
    \[
        \kappa_3(0)\in [3.2481,3.2482].
    \]
    The relation \eqref{eq:kappa-mu-bar-relation} implies that \(\overline{\mu}_3(0)\geq 23.5\). Figure~\ref{fig:f_s_kappa_plot} illustrates the graph of \( f_s(\kappa) \) at \( s = 0 \). 
    For the details of the function \texttt{verifynlssall}, see Remark \ref{rem:verifynlssall}.

    \begin{figure}[H]
    \centering
    \includegraphics[width=0.9\linewidth]{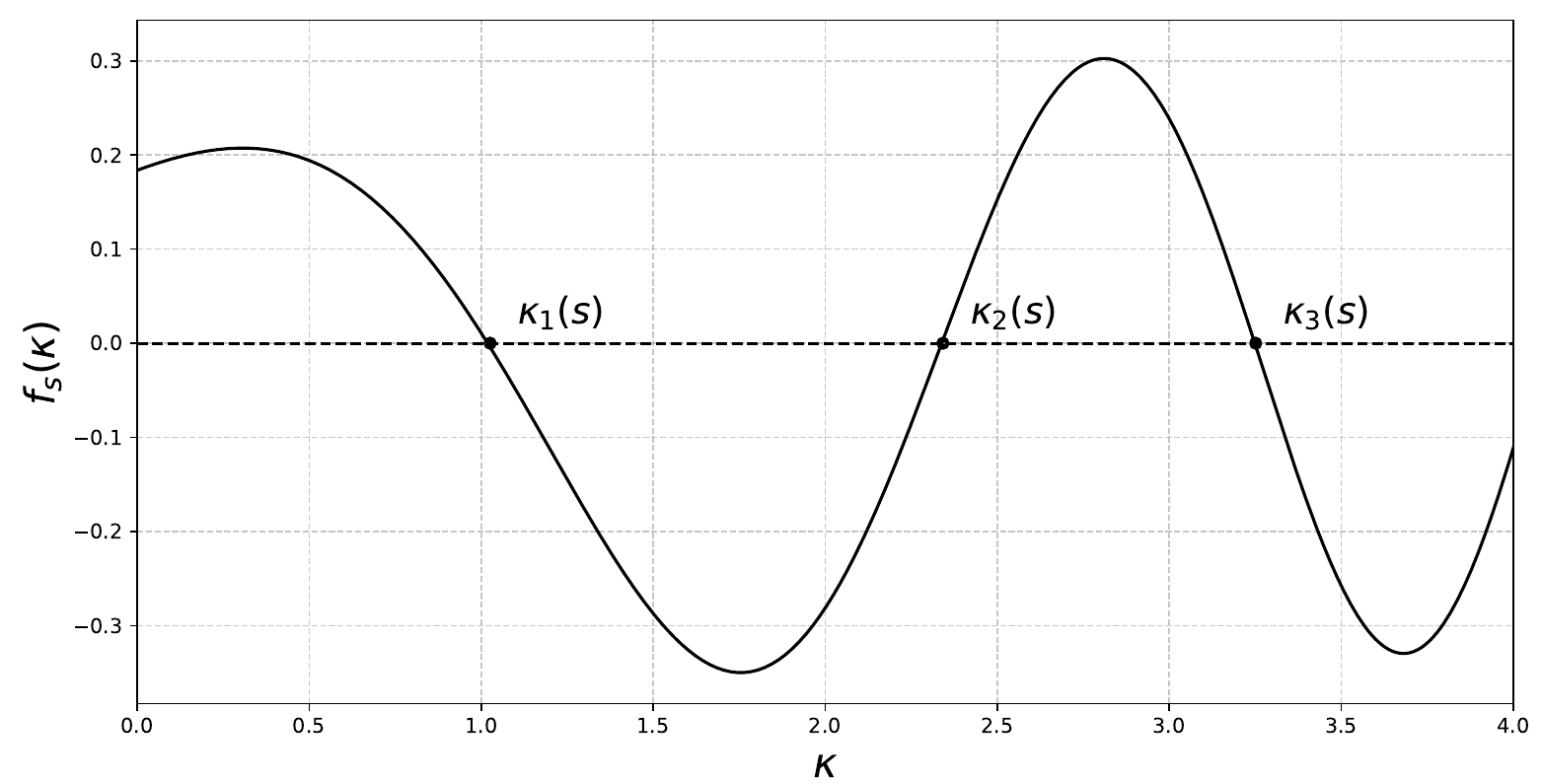}
    \caption{Graph of \( f_s(\kappa) \) at \( s=0 \) for \(\kappa \in [0, 4]\)}
    \label{fig:f_s_kappa_plot}
    \end{figure}
    
    \item \textbf{Investigation of the branch  \(\overline{\mu}_3(s)\) for \(s \in [0,1)\):} By evaluating the range of $f_s(\kappa)$ directly, it is validated that for any $s\in [0,1)$, 
    \[
        f_s(\kappa) > 0 \quad \text{for} \quad \kappa\in [3.2174,3.2175] , 
    \]
\end{enumerate}
which further leads to $\overline{\mu}_3(s)\neq 23.5$ from the relation \eqref{eq:kappa-mu-bar-relation}.
As noted in  Lemma~\ref{eq:bonafos},  
 the mapping \( s \mapsto \kappa_3(s) \) is analytic on \((-1,1)\). Since $\overline{\mu}_3(0)> 23.5$, the above steps ensure that $\overline{\mu}_3(s) > 23.5$
for all \( s \in [0,1) \).

\medskip
\medskip

Consequently, we obtain the following uniform lower bound for $\mu_3^t(s)$ for all $s \in [0,1)$:
\begin{equation}\label{eq:lower-bound-for-mu3}
    21.149 \leq \inf_{s\in[0,1)} \frac{\overline{\mu}_3(s)}{1 + t_0^{\frac{2}{3}}/(3\pi^2) \cdot \overline{\mu}_3(s)} 
    \leq \mu_3^t(s),
\end{equation}
where $t_0=\tan(\pi/60)$.

From the bounds \eqref{eq:upper-bound-for-mu2} and \eqref{eq:lower-bound-for-mu3}, the eigenvalues $\mu_k^t(s)~(k=2,3)$ satisfy the following inequality:
\begin{equation}
    \mu_2^t(s)\leq 21.091 < 21.149\leq \mu_3^t(s) \quad \text{for any } (s,t) \in \Omega_{\text{down}}.
\end{equation}
From the relation \eqref{eq:relation_mu_and_lambda}, we obtain
\begin{equation}
    \lambda_2(s,t) < \lambda_3(s,t) \quad \text{for any } (s,t) \in \Omega_{\text{down}}.
\end{equation}
Thus, we complete the proof of Theorem \ref{thm:degenerate-case-for-the-main-conjecture}.

\begin{remark}\label{rem:verifynlssall}
The function \texttt{verifynlssall} is a rigorous global solver for nonlinear systems that locates all zeros of a continuous and differentiable function 
\[
f : \mathbb{R}^n \to \mathbb{R}^n
\]
within a prescribed compact box \(x_0 \subset \mathbb{R}^n\). It rigorously demonstrates both the existence and the uniqueness of the solution of the nonlinear equation within the given interval. Specifically, the algorithm produces a collection of disjoint closed boxes, each certified to contain exactly one zero of \(f\). For further theoretical details, refer to \cite{Rump2018,Knueppel1994}.
\end{remark}

\section{Conclusion}

We derived the estimate \eqref{eq:main-estimation} by comparing the Laplacian eigenvalue problem with the eigenvalue problem of the Schr\"{o}dinger operator that emerges as the domain collapses. Using this estimate, we established the simplicity of the second Dirichlet eigenvalue \(\lambda_2\) for triangles with minimum normalized height less than $\tan(\pi/60)/2$. In conjunction with the results from our previous work~\cite{endo2024guaranteed} for triangles with  minimum normalized height greater than $\tan(\pi/60)/2$, we arrive at the following theorem:

\begin{theorem}\label{thm:main-conjecture}
The second Dirichlet eigenvalue is simple for every non-equilateral triangle.
\end{theorem}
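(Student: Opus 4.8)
The plan is to combine the two partial results that have already been secured into a single statement covering all non-equilateral triangles. Theorem~4.1 of \cite{endo2024guaranteed} handles every non-equilateral triangle whose minimum normalized height is at least $\tan(\pi/60)/2$, and Theorem~\ref{thm:degenerate-case-for-the-main-conjecture}, proved in Section~\ref{sec:computer-assister-proof}, handles every non-equilateral triangle whose minimum normalized height is at most $\tan(\pi/60)/2$. Since these two ranges are complementary and overlap on the single value $\tan(\pi/60)/2$, their union is the set of all non-equilateral triangles, and simplicity of $\lambda_2$ holds on each piece; hence it holds on all of them.

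In more detail, I would first recall that the minimum normalized height is a scaling- and isometry-invariant quantity associated to a triangle, so that every non-equilateral triangle $T$ has a well-defined value $h_{\min}(T) \in (0, \sqrt{3}/2)$, with the equilateral case being exactly $h_{\min} = \sqrt{3}/2$. Then I would observe the trivial dichotomy: either $h_{\min}(T) \ge \tan(\pi/60)/2$ or $h_{\min}(T) \le \tan(\pi/60)/2$. In the first case, Theorem~4.1 of \cite{endo2024guaranteed} gives $\lambda_2(T) < \lambda_3(T)$. In the second case, Theorem~\ref{thm:degenerate-case-for-the-main-conjecture} gives $\lambda_2(T) < \lambda_3(T)$. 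In either case $\lambda_2(T)$ is simple, which is the assertion of Theorem~\ref{thm:main-conjecture}.

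There is really no obstacle here: the statement is a one-line corollary of the two cited theorems, and all the mathematical content has already been expended in establishing them (the quantitative eigenvalue estimate \eqref{eq:main-estimation}, the verified-computation bounds of Step~1 and Step~2, and the prior work \cite{endo2024guaranteed}). The only point that deserves a sentence of care is making explicit that the threshold value $\tan(\pi/60)/2$ is covered by both theorems, so that there is genuinely no gap between ``$\ge$'' and ``$\le$''; once that is noted, the union argument is immediate. If anything, the ``hard part'' is purely expository: ensuring the reader sees that Conjecture~\ref{len:main-conjecture} is now fully resolved, i.e., that ``non-equilateral'' is exactly the complement of the one excluded shape and that no triangle falls outside the two height ranges.
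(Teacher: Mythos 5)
Your proposal matches the paper's own argument: the theorem is obtained exactly by combining Theorem~4.1 of \cite{endo2024guaranteed} (minimum normalized height $\ge \tan(\pi/60)/2$) with Theorem~\ref{thm:degenerate-case-for-the-main-conjecture} (minimum normalized height $\le \tan(\pi/60)/2$), the two ranges covering every non-equilateral triangle. Your extra remark that the threshold value is covered by both statements is a correct and harmless clarification; no gap.
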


\appendix

\section{Properties about eigenvalues of the Schr\"{o}dinger operator}
\label{appendix-airy}

\begin{lemma}\label{lem:problem1-2-appendix}The pair \((\hat{\mu}, \hat{u})\) is an eigen-pair of Problem 2 if and only if \[ \left(\hat{\mu},\, \hat{u}\Bigl(t^{-2/3}(x-s)\Bigr)\sqrt{\frac{2}{h(x)}} \sin\left(\frac{\pi y}{h(x)}\right)\right) \] is an eigen-pair of the following eigenvalue problem:  Find $  u \in W(T^t)$ and  $\hat{\mu} > 0$ such that 
\begin{equation}\label{eq:problem1-in-lemma}
  t^{4/3} \left[ (\nabla u, \nabla v)_{T^t} - \frac{\pi^2}{t^2} (u, v)_{T^t} \right] = \hat{\mu} (u, v)_{T^t} \quad \text{for all } v \in W(T^t).  
\end{equation}
\end{lemma}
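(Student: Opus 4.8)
The plan is to establish the equivalence by passing between the one-dimensional weak formulation on $I^t$ and the two-dimensional weak formulation restricted to $W(T^t)$ through the explicit change of variables $u(x,y) = v(t^{-2/3}(x-s))\sqrt{2/h(x)}\,\sin(\pi y/h(x))$, where $v \in H^1_0(I)$ (equivalently, after rescaling, $\hat v \in H^1_0(I^t)$). First I would fix the ansatz function $\Phi_v(x,y) := v(x)\sqrt{2/h(x)}\sin(\pi y/h(x))$ for $v \in H^1_0(I)$ and compute the Dirichlet form $(\nabla \Phi_v, \nabla \Phi_w)_{T^t}$ and the mass form $(\Phi_v,\Phi_w)_{T^t}$ explicitly. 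The $y$-integration is elementary: $\int_0^{h(x)} \phi_1(y;x)^2\,dy = 1$, $\int_0^{h(x)} (\partial_y\phi_1)^2\,dy = \pi^2/h(x)^2$, and the cross terms involving $\int_0^{h(x)} y\,\phi_1 \partial_y\phi_1\,dy$ and $\int_0^{h(x)} y^2(\partial_y\phi_1)^2\,dy$ produce the constants $1/2$ and $(\pi^2/3 + 1/4)$ (these are exactly the same integral identities used in the proof of Lemma~\ref{lem:projection-error-estimate}, specialized to $n=1$). Collecting the $x$-derivative contributions as in \eqref{eq:derivative-x}, one finds
\[
(\nabla\Phi_v,\nabla\Phi_v)_{T^t} = \int_{-1}^{1}\!\left( |v'(x)|^2 + \left[\frac{\pi^2}{h(x)^2} + \frac{h'(x)^2}{h(x)^2}\cdot\frac{4\pi^2+3}{12}\right]|v(x)|^2\right)dx,
\]
and $(\Phi_v,\Phi_v)_{T^t} = \|v\|_{L^2(I)}^2$, so that the $2$D Rayleigh quotient on $W(T^t)$ coincides (after the substitution $x \mapsto t^{2/3}x + s$, which rescales $I$ to $I^t$ and introduces the Jacobian that converts $h(x)\le t$ appropriately) with the $1$D form appearing in Problem~2 with potential $V(t,x)$ from \eqref{def-V-t-x}. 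The key computational step is verifying that the $t^{4/3}[\,\cdot\, - \pi^2/t^2(\cdot,\cdot)]$ shift and the scaling $x = t^{2/3}\xi + s$ together turn the bracketed coefficient above into precisely $V(t,\xi)$; this is bookkeeping but must be done carefully.

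Once the bilinear forms are identified, the equivalence of eigen-pairs follows from the fact that the map $v \mapsto \Phi_v$ (composed with the rescaling) is a linear isomorphism from $H^1_0(I^t)$ onto $W(T^t)$ which is simultaneously an isometry for the $L^2$ inner products and intertwines the two sesquilinear forms. Concretely: if $(\hat\mu,\hat u)$ satisfies \eqref{eq:problem2}, then for any $w \in W(T^t)$, writing $w = \Phi_{\tilde w}$ with $\tilde w \in H^1_0(I^t)$, the identity between forms gives $t^{4/3}[(\nabla \hat U,\nabla w)_{T^t} - \frac{\pi^2}{t^2}(\hat U, w)_{T^t}] = \int_{I^t}(\hat u'\tilde w' + V(t,x)\hat u\tilde w)\,dx = \hat\mu\int_{I^t}\hat u\tilde w\,dx = \hat\mu(\hat U,w)_{T^t}$, where $\hat U$ is the image of $\hat u$; this is exactly \eqref{eq:problem1-in-lemma}. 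The converse direction is identical, running the isomorphism backwards. I would also note that test functions $w$ ranging over all of $W(T^t)$ correspond bijectively to test functions $\tilde w$ ranging over all of $H^1_0(I^t)$, so no functions are lost in either direction.

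The main obstacle I anticipate is the careful treatment of the change of variables and the regularity/boundary conditions: one must check that $\Phi_v \in H^1_0(T^t)$ genuinely (vanishing on all three sides of the triangle — on $y=0$ and $y=h(x)$ because of the $\sin$ factor, and on the slanted sides where $h(x)\to 0$ because $v$ is bounded and $\sqrt{2/h(x)}\sin(\pi y/h(x))$ stays bounded as $h(x)\to 0$ while the whole expression is forced to $0$), and that the weak $x$-derivative of $\Phi_v$ is correctly computed given that $h$ is only piecewise linear (so $h'$ has a jump at $x=s$, but this is a measure-zero set and causes no trouble in the $L^2$ integrals). A secondary technical point is justifying the termwise differentiation and the interchange of summation and integration implicitly used when restricting to the single mode $\phi_1$; since $W(T^t)$ is defined directly as the span of these product functions rather than as a projection, this is cleaner here than in Lemma~\ref{lem:projection-error-estimate}, but the Fubini argument for the $y$-integrals should still be mentioned. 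None of these is deep, but writing them precisely is where the proof's substance lies.
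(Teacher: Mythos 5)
Your proposal is correct and follows essentially the same route as the paper's proof: compute the Dirichlet and mass forms on the product ansatz $v(x)\sqrt{2/h(x)}\sin(\pi y/h(x))$ by carrying out the $y$-integration (yielding the potential $\pi^2/h^2 + (3+4\pi^2)h'^2/(12h^2)$), perform the change of variables $x = t^{2/3}\xi + s$ to recover $V(t,\xi)$ from \eqref{def-V-t-x}, and transfer eigen-pairs both ways through the resulting isomorphism between $H^1_0(I^t)$ and $W(T^t)$. No substantive difference from the paper's argument.
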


\begin{proof}
Suppose that \( (\hat{\mu}, \hat{u}) \) is an eigenpair of Problem 2. Define \( u \in W(T^t) \) by
\begin{equation} \label{eq:u-def}
u(x, y) = \hat{u}(\xi) \, \phi(x, y),
\end{equation}
where
\[
\xi = t^{-2/3}(x - s), \quad \phi(x, y) = \sqrt{\dfrac{2}{h(x)}} \sin\left( \dfrac{\pi y}{h(x)} \right).
\]
Similarly, for any \( \hat{v} \in H^1_0(I^t) \), define \( v(x, y) = \hat{v}(\xi) \, \phi(x, y) \).
First, note that \( u \in H^1_0(T^t) \) because \( \hat{u} \in H^1_0(I^t) \) and \( \phi(x, y) \) vanishes at \( y = 0 \) and \( y = h(x) \).

Our goal is to show that \( u \) satisfies  \eqref{eq:problem1-in-lemma}. We compute the terms \( (\nabla u, \nabla v)_{T^t} \) and \( (u, v)_{T^t} \).
Using the chain rule, the partial derivatives of \( u \) are expressed by
\begin{align*}
\frac{\partial u}{\partial x} &= t^{-2/3} \hat{u}'(\xi) \, \phi(x, y) + \hat{u}(\xi) \, \frac{\partial \phi}{\partial x}, \\
\frac{\partial u}{\partial y} &= \hat{u}(\xi) \, \frac{\partial \phi}{\partial y}.
\end{align*}
Similarly for \( v \).
After integrating over \( y \) and simplifying, we obtain
\begin{align}\label{eq:integral-nabla-l2}
\begin{split}
(\nabla u, \nabla v)_{T^t} 
&= \int_{T^t} \left( \frac{\partial u}{\partial x} \frac{\partial v}{\partial x} + \frac{\partial u}{\partial y} \frac{\partial v}{\partial y} \right) dx \, dy\\
&= \int_{-1}^{1} \left[ t^{-4/3} \hat{u}'(\xi) \hat{v}'(\xi) + \hat{u}(\xi) \hat{v}(\xi) \, W(x) \right] dx, \\
(u, v)_{T^t} &= \int_{-1}^{1} \hat{u}(\xi) \hat{v}(\xi) \, dx,
\end{split}
\end{align}
where
\begin{equation}
    W(x) = \dfrac{\pi^2}{h(x)^2} + \dfrac{(3 + 4\pi^2) [h'(x)]^2}{12 h(x)^2}.
\end{equation}
Next, we change variables from \( x \) to \( \xi \):
\begin{equation}\label{eq:change-variables}
    x = t^{2/3} \xi + s, \quad dx = t^{2/3} d\xi.
\end{equation}
Substituting \eqref{eq:change-variables} into \eqref{eq:integral-nabla-l2}, we get
\begin{align*}
(\nabla u, \nabla v)_{T^t} &= t^{2/3} \int_{I^t} \left[ t^{-4/3} \hat{u}'(\xi) \hat{v}'(\xi) + \hat{u}(\xi) \hat{v}(\xi) \, W(t^{2/3} \xi + s) \right] t^{2/3} d\xi \\
&= \int_{I^t} \left[ \hat{u}'(\xi) \hat{v}'(\xi) + t^{4/3} \hat{u}(\xi) \hat{v}(\xi) \, W(t^{2/3} \xi + s) \right] d\xi, \\
(u, v)_{T^t} &= t^{2/3} \int_{I^t} \hat{u}(\xi) \hat{v}(\xi) \, t^{2/3} d\xi = t^{4/3} \int_{I^t} \hat{u}(\xi) \hat{v}(\xi) \, d\xi.
\end{align*}
Thus, it follows that
\begin{align}
&t^{4/3} \left[ (\nabla u, \nabla v)_{T^t} - \dfrac{\pi^2}{t^2} (u, v)_{T^t} \right]\\
&~~~~~=
\int_{I^t} \left[ \hat{u}'(\xi) \hat{v}'(\xi) + \left( t^{4/3} W(t^{2/3} \xi + s) - \dfrac{\pi^2}{t^2} t^{4/3} \right) \hat{u}(\xi) \hat{v}(\xi) \right] d\xi.
\end{align}
Simplifying the potential term, we have
\begin{align}
&t^{4/3} \left( W(t^{2/3} \xi + s) - \dfrac{\pi^2}{t^2} \right) \\
&~~= t^{4/3} \left( \dfrac{\pi^2}{h(t^{2/3} \xi + s)^2} + \dfrac{(3 + 4\pi^2) [h'(t^{2/3} \xi + s)]^2}{12 h(t^{2/3} \xi + s)^2} - \dfrac{\pi^2}{t^2} \right)(=:V(t, \xi)).
\end{align}
Therefore, the left-hand side of \eqref{eq:problem1-in-lemma} becomes
\[
\int_{I^t} \left[ \hat{u}'(\xi) \hat{v}'(\xi) + V(t, \xi) \hat{u}(\xi) \hat{v}(\xi) \right] d\xi.
\]
The right-hand side of \eqref{eq:problem1-in-lemma} simplifies to
\[
\hat{\mu} \,  \int_{I^t} \hat{u}(\xi) \hat{v}(\xi) \, d\xi.
\]
Since the factor \( t^{4/3} \) appears on both sides, it can be canceled out, leading to the variational formulation
\[
\int_{I^t} \left[ \hat{u}'(\xi) \hat{v}'(\xi) + V(t, \xi) \hat{u}(\xi) \hat{v}(\xi) \right] d\xi = \hat{\mu} \int_{I^t} \hat{u}(\xi) \hat{v}(\xi) \, d\xi,
\]
which is exactly the eigenvalue problem of Problem 2.

Conversely, if \( (\hat{\mu}, u) \) is an eigenpair of \eqref{eq:problem1-in-lemma} with \( u \in W(T^t) \), then \( u \) can be expressed in the form \eqref{eq:u-def}. Reversing the above steps shows that \( (\hat{\mu}, \hat{u}) \) satisfies Problem 2. Thus, the eigenpairs of Problem 2 correspond precisely to those of \eqref{eq:problem1-in-lemma} within \( W(T^t) \).
\end{proof}


Let us consider the operator
\begin{equation}\label{eq:def-of-l-s-mod}
    l_s^{\text{mod}} := -\frac{d^2}{dx^2} + v_s^{\mathrm{mod}}(x), ~\text{with} ~ v_s^{\mathrm{mod}}(x) := \left( \frac{1}{s+1}\,\mathbf{1}_{\mathbb{R}_-}(x) + \frac{1}{s-1}\,\mathbf{1}_{\mathbb{R}_+}(x) \right) |x|,
\end{equation}
defined on \(H^2(\mathbb{R})\).

In this section, we prove the following lemma showing that a positive number \(\kappa\) is an eigenvalue of \(l_s^{\mathrm{mod}}\) if and only if it satisfies an implicit equation involving the Airy function.

\begin{lemma}\label{lem:eigen-implicit}
For fixed \(s\in(-1,1)\), a positive real number \(\kappa\) is an eigenvalue of \(l_s^{\text{mod}}\) if and only if it satisfies
\begin{equation}\label{eq:implicit}
(1+s)^{1/3}\,\mathcal{A}\Bigl((1+s)^{2/3}\kappa\Bigr)\,\mathcal{A}'\Bigl((1-s)^{2/3}\kappa\Bigr)
+(1-s)^{1/3}\,\mathcal{A}\Bigl((1-s)^{2/3}\kappa\Bigr)\,\mathcal{A}'\Bigl((1+s)^{2/3}\kappa\Bigr)
=0,
\end{equation}
where
\[
\mathcal{A}(x)=\operatorname{Ai}(-x),\qquad \mathcal{A}'(x)=-\operatorname{Ai}'(-x),
\]
and \(\operatorname{Ai}\) is the standard Airy function.
\end{lemma}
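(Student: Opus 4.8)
The plan is to reduce the eigenvalue equation $l_s^{\mathrm{mod}}\psi=\kappa\psi$ to the Airy equation on each of the half-lines $\mathbb R_+$ and $\mathbb R_-$, to retain on each side only the branch that decays at infinity, and then to express the requirement that these two branches glue into a single global $H^2(\mathbb R)$ function as the vanishing of a $2\times2$ determinant; a short computation then identifies that determinant condition with \eqref{eq:implicit}.

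First I would set the stage. The potential $v_s^{\mathrm{mod}}$ of \eqref{eq:def-of-l-s-mod} is continuous, nonnegative and confining: on $\mathbb R_+$ it is the linear function $x\mapsto x/(1-s)$ and on $\mathbb R_-$ it is $x\mapsto |x|/(1+s)$, both slopes positive since $s\in(-1,1)$. Hence $l_s^{\mathrm{mod}}$ is a nonnegative self-adjoint operator with compact resolvent (so its spectrum is a sequence of eigenvalues tending to $+\infty$), and since its quadratic form $\int|\psi'|^2+\int v_s^{\mathrm{mod}}|\psi|^2$ vanishes only at $\psi\equiv 0$, every eigenvalue is strictly positive; this is why it suffices to discuss $\kappa>0$. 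Because $v_s^{\mathrm{mod}}$ is continuous, a function $\psi\in H^2(\mathbb R)$ satisfies $l_s^{\mathrm{mod}}\psi=\kappa\psi$ iff $\psi\in C^2(\mathbb R)$ and $-\psi''+v_s^{\mathrm{mod}}\psi=\kappa\psi$ pointwise; equivalently, the restrictions $\psi|_{\mathbb R_\pm}$ solve the corresponding ODE on each open half-line, belong to $L^2$ near $\pm\infty$, and $\psi$ is of class $C^1$ at $x=0$. So $\kappa$ is an eigenvalue exactly when such a nonzero $\psi$ exists.

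Next I would solve the two one-sided problems. On $\mathbb R_+$ the equation is $\psi''=\bigl(x/(1-s)-\kappa\bigr)\psi$; the affine substitution $u=(1-s)^{-1/3}x-(1-s)^{2/3}\kappa$ turns it into $\psi_{uu}=u\psi$, whose general solution is a combination of $\operatorname{Ai}(u)$ and $\operatorname{Bi}(u)$. Since $x\in(0,\infty)$ corresponds to $u\in\bigl(-(1-s)^{2/3}\kappa,\infty\bigr)$ and $\operatorname{Bi}$ grows super-exponentially while $\operatorname{Ai}$ decays super-exponentially as $u\to+\infty$, the $L^2$-at-$+\infty$ condition forces $\psi(x)=C_+\operatorname{Ai}\bigl((1-s)^{-1/3}x-(1-s)^{2/3}\kappa\bigr)$. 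Reflecting $x\mapsto-x$ treats $\mathbb R_-$ the same way and gives $\psi(x)=C_-\operatorname{Ai}\bigl(-(1+s)^{-1/3}x-(1+s)^{2/3}\kappa\bigr)$ there. Evaluating these expressions and their $x$-derivatives at $x=0$, and rewriting through $\mathcal A(x)=\operatorname{Ai}(-x)$, $\mathcal A'(x)=-\operatorname{Ai}'(-x)$, the matching conditions $\psi(0^+)=\psi(0^-)$ and $\psi'(0^+)=\psi'(0^-)$ become a homogeneous linear $2\times2$ system for $(C_+,C_-)$ with coefficients $\mathcal A\bigl((1\mp s)^{2/3}\kappa\bigr)$ and $(1\mp s)^{-1/3}\mathcal A'\bigl((1\mp s)^{2/3}\kappa\bigr)$. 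A nonzero $(C_+,C_-)$ exists iff the determinant vanishes; expanding it and multiplying through by $-(1-s)^{1/3}(1+s)^{1/3}$ yields exactly the left-hand side of \eqref{eq:implicit}. This gives the "only if" direction at once. For "if", when \eqref{eq:implicit} holds I would take a nonzero kernel vector $(C_+,C_-)$ and check $C_+\neq0$: if $C_+=0$, the two equations would force both $\mathcal A\bigl((1+s)^{2/3}\kappa\bigr)=0$ and $\mathcal A'\bigl((1+s)^{2/3}\kappa\bigr)=0$, impossible since $\operatorname{Ai}$ and $\operatorname{Ai}'$ have no common zero. Hence the glued $\psi$ is nonzero, lies in $H^2(\mathbb R)$ (the two Airy branches decay at $\pm\infty$ and match in $C^1$, hence in $C^2$ because $v_s^{\mathrm{mod}}$ is continuous at $0$) and solves $l_s^{\mathrm{mod}}\psi=\kappa\psi$, so $\kappa$ is an eigenvalue. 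Up to the sign convention adopted for $\mathcal A'$, \eqref{eq:implicit} is the equation $f_s(\kappa)=0$ of Lemma~\ref{thm:airy-eigen}.

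I expect the only genuinely substantive step to be the exclusion of the $\operatorname{Bi}$-branch, that is, the assertion that on each half-line the $H^2$ (equivalently $L^2$-near-infinity) constraint cuts the two-dimensional solution space of the Airy ODE down to one dimension; this rests on the standard asymptotics of $\operatorname{Ai}$ and $\operatorname{Bi}$ at $+\infty$ and can be quoted. Everything else — self-adjointness, discreteness and positivity of the spectrum of $l_s^{\mathrm{mod}}$ (already invoked in Lemma~\ref{thm:bonafos}), the algebra of the affine substitutions, and the $2\times2$ determinant computation — is routine, the main nuisance being the careful bookkeeping of the powers $(1\pm s)^{1/3}$, $(1\pm s)^{2/3}$ and of the signs introduced by the reflection $x\mapsto-x$ and by the convention $\mathcal A'(x)=-\operatorname{Ai}'(-x)$.
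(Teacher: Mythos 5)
Your proposal is correct and follows essentially the same route as the paper's proof: on each half-line the eigenvalue equation is reduced to the Airy equation, only the square-integrable (decaying) branch is retained, and the $C^1$ matching at $x=0$ is written as a homogeneous $2\times 2$ system whose vanishing determinant is exactly \eqref{eq:implicit}. Your extra touches---explicitly excluding the $\operatorname{Bi}$ branch via its growth at $+\infty$, noting that $\operatorname{Ai}$ and $\operatorname{Ai}'$ have no common zero in the converse direction, and reading the potential on $\mathbb{R}_+$ as the confining $x/(1-s)$ (which is what the paper's own Airy substitution and solution actually use, notwithstanding the sign $1/(s-1)$ appearing in \eqref{eq:def-of-l-s-mod})---only refine the same argument.
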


\begin{proof}
Assume first that \(\kappa>0\) is an eigenvalue of \(l_s^{\mathrm{mod}}\) with eigenfunction \(\Psi\in H^2(\mathbb{R})\) satisfying
\[
l_s^{\mathrm{mod}}\Psi=\kappa\Psi.
\]
Since the potential \(\hat{v}_s^{\mathrm{mod}}(x)\) is defined piecewise, we decompose \(\Psi\) as
\[
\Psi(x)=
\begin{cases}
\Psi^-(x), & x<0,\\[1mm]
\Psi^+(x), & x>0.
\end{cases}
\]

\medskip
For \(x<0\),  the eigenvalue equation reads
\[
-\Psi''(x)-\frac{1}{s+1}\,x\,\Psi(x)=\kappa\Psi(x).
\]
Introduce the change of variables
\[
\xi=(1+s)^{-1/3}\Bigl(x+\kappa(1+s)\Bigr).
\]
A short computation shows that this transforms the equation into the standard Airy equation:
\begin{equation}
\Psi''(\xi)+\xi\,\Psi(\xi)=0  .
\end{equation}
Hence, the square-integrable solution on \((-\infty,0)\) is
\begin{equation}\label{eq:Psi-minus}
\Psi^-(x)=\alpha^-\,\mathcal{A}\Bigl((1+s)^{-1/3}\bigl(x+\kappa(1+s)\bigr)\Bigr),
\end{equation}
for some constant \(\alpha^-\in\mathbb{R}\). In particular, at \(x=0\) we have
\[
\Psi^-(0)=\alpha^-\,\mathcal{A}\Bigl((1+s)^{2/3}\kappa\Bigr).
\]

\medskip
For \(x>0\), the eigenvalue equation becomes
\[
-\Psi''(x)-\frac{1}{1-s}\,x\,\Psi(x)=\kappa\Psi(x).
\]
To reduce this to the standard Airy equation, we set
\[
\xi=(1-s)^{-1/3}\Bigl(\kappa(1-s)-x\Bigr).
\]
Then one may check that the square-integrable solution on \((0,\infty)\) is given by
\begin{equation}\label{eq:Psi-plus}
\Psi^+(x)=\alpha^+\,\mathcal{A}\Bigl((1-s)^{-1/3}\bigl(\kappa(1-s)-x\bigr)\Bigr),
\end{equation}
with \(\alpha^+\in\mathbb{R}\). In particular, at \(x=0\) we obtain
\[
\Psi^+(0)=\alpha^+\,\mathcal{A}\Bigl((1-s)^{2/3}\kappa\Bigr).
\]

\medskip
Since \(\Psi\in H^2(\mathbb{R})\), both \(\Psi\) and its derivative must be continuous at \(x=0\). Hence, the matching conditions
\[
\alpha^-\,\mathcal{A}\Bigl((1+s)^{2/3}\kappa\Bigr)
-\alpha^+\,\mathcal{A}\Bigl((1-s)^{2/3}\kappa\Bigr)
=0,
\]
and
\[
\alpha^-\,(1+s)^{-1/3}\,\mathcal{A}'\Bigl((1+s)^{2/3}\kappa\Bigr)
+\alpha^+\,(1-s)^{-1/3}\,\mathcal{A}'\Bigl((1-s)^{2/3}\kappa\Bigr)
=0,
\]
must hold. In matrix form, this system reads
\[
\begin{pmatrix}
\mathcal{A}((1+s)^{2/3}\kappa) & -\mathcal{A}((1-s)^{2/3}\kappa)\\[1mm]
(1+s)^{-1/3}\,\mathcal{A}'((1+s)^{2/3}\kappa) & (1-s)^{-1/3}\,\mathcal{A}'((1-s)^{2/3}\kappa)
\end{pmatrix}
\begin{pmatrix}\alpha^-\\[1mm]\alpha^+\end{pmatrix}
=\begin{pmatrix}0\\0\end{pmatrix}.
\]
A basic result in linear algebra shows that a nontrivial solution exists if and only if the determinant of the coefficient matrix vanishes. A short calculation reveals that this determinant, up to the positive factor \((1+s)^{1/3}(1-s)^{1/3}\), is exactly
\[
(1+s)^{1/3}\,\mathcal{A}((1+s)^{2/3}\kappa)\,\mathcal{A}'((1-s)^{2/3}\kappa)
+(1-s)^{1/3}\,\mathcal{A}((1-s)^{2/3}\kappa)\,\mathcal{A}'((1+s)^{2/3}\kappa).
\]
Thus, a nontrivial solution exists if and only if the implicit equation \eqref{eq:implicit} holds.

\medskip
Conversely, if \(\kappa>0\) satisfies \eqref{eq:implicit}, then the coefficient matrix is singular so that a nontrivial pair \((\alpha^-,\alpha^+)\neq(0,0)\) exists. Defining
\[
\Psi(x)=
\begin{cases}
\alpha^-\,\mathcal{A}\Bigl((1+s)^{-1/3}\bigl(x+\kappa(1+s)\bigr)\Bigr), & x<0,\\[1mm]
\alpha^+\,\mathcal{A}\Bigl((1-s)^{-1/3}\bigl(\kappa(1-s)-x\bigr)\Bigr), & x>0,
\end{cases}
\]
one verifies that \(\Psi\in H^2(\mathbb{R})\) is continuously differentiable at \(x=0\) and satisfies
\[
l_s^{\text{mod}}\Psi=\kappa\Psi.
\]
Hence, \(\kappa\) is an eigenvalue of \(l_s^{\mathrm{mod}}\).

\end{proof}

\section*{Acknowledgement}
Both authors are supported by Japan Society for the Promotion of Science. The first author is supported by JSPS KAKENHI Grant Number JP24KJ1170. The last author is supported by JSPS KAKENHI Grant Numbers JP22H00512, JP24K00538, JP21H00998 and JPJSBP120237407.

\bibliographystyle{elsarticle-num} 
\bibliography{references}

\begin{thebibliography}{10}
\expandafter\ifx\csname url\endcsname\relax
  \def\url#1{\texttt{#1}}\fi
\expandafter\ifx\csname urlprefix\endcsname\relax\def\urlprefix{URL }\fi
\expandafter\ifx\csname href\endcsname\relax
  \def\href#1#2{#2} \def\path#1{#1}\fi

\bibitem{henrot2017shape}
A.~Henrot, Shape optimization and spectral theory, De Gruyter Open Poland, 2017.

\bibitem{endo2024guaranteed}
R.~Endo, X.~Liu, Rigorous estimation for the difference quotients of multiple eigenvalues, arXiv preprint arXiv:2305.14063 (2024).

\bibitem{jerison1991first}
D.~Jerison, The first nodal line of a convex planar domain, International Mathematics Research Notices 1991~(1) (1991) 1--5.

\bibitem{jerison1995diameter}
D.~Jerison, The diameter of the first nodal line of a convex domain, Annals of Mathematics 141~(1) (1995) 1--33.

\bibitem{friedlander2009spectrum}
L.~Friedlander, M.~Solomyak, On the spectrum of the dirichlet laplacian in a narrow strip, Israel journal of mathematics 170 (2009) 337--354.

\bibitem{ourmieres2015dirichlet}
T.~Ourmi{\`e}res-Bonafos, Dirichlet eigenvalues of asymptotically flat triangles, Asymptotic Analysis 92~(3-4) (2015) 279--312.

\bibitem{liu2024guaranteed}
X.~Liu, Guaranteed Computational Methods for Self-Adjoint Differential Eigenvalue Problems, Springer Nature, 2024.

\bibitem{liu2015framework}
X.~Liu, A framework of verified eigenvalue bounds for self-adjoint differential operators, Appl. Math. Comput. 267 (2015) 341--355.

\bibitem{Rump1999}
S.~M. Rump, Intlab—interval laboratory, in: Developments in Reliable Computing, Springer, 1999, pp. 77--104.

\bibitem{Rump2018}
S.~M. Rump, Mathematically rigorous global optimization in floating-point arithmetic, Optimization Methods \& Software 33~(4-6) (2018) 771--798.

\bibitem{Knueppel1994}
O.~Knueppel, Einschließungsmethoden zur bestimmung der nullstellen nichtlinearer gleichungssysteme und ihre implementierung, Ph.D. thesis, Hamburg University of Technology, phD thesis (1994).

\end{thebibliography}
						 
\end{document}